\theoremstyle{definition}
\renewcommand{\sectionmark}[1]{\markright{\textit{\thesection. #1}}{} }
\numberwithin{equation}{section}
\newtheorem{theorem}{Theorem}[section]
\newtheorem{proposition}[theorem]{Proposition}
\newtheorem{lemma}[theorem]{Lemma}
\newtheorem{definition}[theorem]{Definition}
\newtheorem{remark}[theorem]{Remark}
\newtheorem{example}[theorem]{Example}
\newtheorem{question}[theorem]{Question}
\newtheorem{open problem}[theorem]{Open problem}
\newcommand{\R}{\mathbb R}
\renewcommand{\Pr}{\mathrm{Pr}}
\newcommand{\Z}{\mathbb Z}
\newcommand{\E}{\mathbb E}
\newcommand{\G}{\mathbb G}
\newcommand{\N}{\mathbb N}
\newcommand{\Q}{\mathbb Q}
\newcommand{\Dom}{\mathrm{Dom}}
\newcommand{\Erg}{\mathrm{Erg}}
\newcommand{\calF}{{\mathcal F}}
\newcommand{\calP}{{\mathcal P}}
\newcommand{\calA}{{\mathcal A}}
\newcommand{\calB}{{\mathcal B}}
\newcommand{\calC}{{\mathcal C}}
\newcommand{\calE}{{\mathcal E}}
\title{Decomposition of the Kantorovich problem and Wasserstein distances on simplexes}
\author{Danila Zaev}
\address{Faculty of Mathematics, Higher School of Economics, Moscow}
\date{\today}
\email{zaev.da@gmail.com}
\keywords{Kantorovich problem, Wasserstein distance, ergodic decomposition, Choquet simplex, invariant measure, disintegration of measures, sufficient statistic, Markov kernel}
\begin{document}

\begin{abstract}
Let $X$ be a Polish space, $\mathcal{P}(X)$  be the set of Borel
probability measures on $X$, and $T\colon X\to X$ be a homeomorphism.
We prove that for the simplex $\mathrm{Dom} \subseteq \mathcal{P}(X)$
of all $T$-invariant measures, the Kantorovich metric on
$\mathrm{Dom}$ can be reconstructed from its values on the set of
extreme points. This fact is closely related to the following result:
the invariant optimal transportation plan is a mixture of invariant
optimal transportation plans between extreme points of the simplex. The
latter result can be generalized to the case of the Kantorovich
problem with additional linear constraints and the class of ergodic
decomposable simplexes.
	
\end{abstract}

\maketitle
\tableofcontents
\section{Introduction}
\sectionmark{Introduction}

In this paper we study an important modification of the Kantorovich problem: the problem of mass transportation with additional linear constraints.
The recent developments about general theory of optimal transport are described in \cite{AmbGigli1}, 
\cite{BogKol}, \cite{Vershik1},   \cite{Villani1}.

Transport problem with linear constraints appears naturally in several applications, in particular, in financial mathematics (martingale problem) and in infinite-dimensional analysis (invariant transport problem w.r.t. action of some group), see
\cite{Beigl}, \cite{KolZaev}, \cite{Moameni}, \cite{Zaev}.

Many fundamental problems of mass transport theory (e.g., the problem of existence of a transport map) for measures on infinite-dimensional spaces lead us to study transportation of ergodic measures and the relations between the structure of optimal transport plans and ergodic decompositions (see \cite{KolZaev} for details).

In this paper we provide an answer for the question: when is it possible to ergodically decompose an optimal transport plan into optimal plans between decompositions of marginals?

Let us start with an example. Let $\G$ be a nice enough group (locally-compact, amenable), acting continuously on a compact metric space $(X,d)$. Its action on $X\times X$ is defined in the diagonal way: $g(x_1,x_2):=(g(x_1),g(x_2))$, where $g$ is the action of an element $g\in \G$ on $X$. Measure $\mu$ on $(X,d)$ is called \textbf{invariant} w.r.t. $\G$ iff $\mu\circ g^{-1}=\mu$ for every $g\in \G$. Denote by $\calP_{\G}(X)\subseteq\calP(X)$ the set of all invariant Borel probability measures on $(X,d)$, equipped with the topology of weak convergence and the corresponding Borel $\sigma$-algebra. It is well-known, that $\calP_\G(X)$ is a compact convex non-empty set. As any other convex set, it contains a subset of all extreme points, i.e. such points, that are midpoints of no line segment with endpoints in $\calP_\G(X)$. Let us denote this set via $\partial_e(\calP_\G(X))$ and call it the boundary of $\calP_\G(X)$. It is clear, that extreme points of $\calP_\G(X)$ are exactly $\G$-ergodic measures. It is also known (see \cite{Phelps} for a proof), that $\calP_\G(X)$ is a simplex. It means that each point of $\calP_\G(X)$ can be represented in the unique way as a barycenter of a probability Borel measure concentrated on the boundary $\partial_e(\calP_\G(X))$.

Assume that $\mu$, $\nu$ are some given measures from $\calP_\G(X)$. Consider the following optimization problem, which is usually called Kantorovich problem (\cite{AmbGigli1}, \cite{BogKol}, \cite{Vershik1}, \cite{Villani1}):
$$
\inf\left\{\int c d\pi: \pi\in \calP(X\times X),\ \Pr_1(\pi)=\mu, \Pr_2(\pi)=\nu\right\}.
$$
Here $c:X\times X\rightarrow \R$ is some continuous bounded below function, $\Pr_k(\pi)$ is the $k$-th marginal of measure $\pi$. One can think about this problem as about a problem of finding an optimal way to transfer one mass distribution (measure $\mu$) to the other one (measure $\nu$) with respect to the transfer cost $c$. Measures $\pi$ are called transport plans, and can be thought of as generalized maps from one measure space to the other.

It is quite natural to consider in this situation not the set of all transport plans, but the set of invariant ones w.r.t. the action of $\G$ on $X\times Y$. Then the corresponding the Kantorovich problem has the form:
$$
\inf\left\{\int c d\pi\colon\
\pi\in {\mathcal P}_\G(X\times X), \mathrm{Pr}_1(\pi)=\mu, \mathrm{Pr}_2(\pi)=\nu\right\}
$$
This is an example of Kantorovich problem with additional linear restrictions.
This modification of Kantorovich problem was independently formulated and studied in \cite{Beigl} and \cite{Zaev}.
In our case an additional restriction is the restriction of invariance.

Since $\mu$ and $\nu$ are elements of simplex $\calP_\G(X)$, they can be uniquely represented as follows
$$
\mu=\int_{\partial_e(\mathcal P_\G(X))} \xi_\alpha d\tilde{\mu}(\alpha)=:\mathrm{bar}(\tilde{\mu}),\
\nu=\int_{\partial_e(\mathcal P_\G(X))} \xi_\beta d\tilde{\nu}(\beta)=: \mathrm{bar}(\tilde{\nu}),
$$
i.e. as barycenters of some probability measures concentrated on the boundary of $\calP_\G(X)$: $\tilde{\mu}, \tilde{\nu}\in \calP(\partial_e(\calP_\G(X)))$. It is natural to ask, whether it is possible to decompose Kantorovich problem in the following way:
\begin{multline}
\label{example of theorem 1}
\inf\left\{\int c d\pi\colon\, \pi\in {\mathcal P}_\G\G(X\times X),\ \mathrm{Pr}_1(\pi)=\mu, \mathrm{Pr}_2(\pi)=\nu\right\}=\\
=\inf\left\{\int \inf\left\{\int c d\pi\colon\, \pi\in {\mathcal P}_\G(X\times X),\ \mathrm{Pr}_1(\pi)=\xi_\alpha, \mathrm{Pr}_2(\pi)=\xi_\beta\right\}
d\tilde{\pi}\colon\, \tilde{\pi}\in \tilde{\Pi}(\tilde{\mu},\tilde{\nu})\right\},
\end{multline}
where $\tilde{\Pi}(\tilde{\mu},\tilde{\nu}):=\left\{\pi\in \calP(\partial_e(\calP_\G(X))\times \partial_e(\calP_\G(X))): \Pr_1(\pi)=\tilde{\mu},\ \Pr_2(\pi)=\tilde{\nu}\right\}$. In other words, is it possible to split Kantorovich problem into two sub-problems: the first one is to calculate the minimal cost of transportation between any pair of measures \textbf{from} the boundary of a simplex, and the second one is to find an optimal way to transport measures \textbf{on} the boundary, i.e. measures on the set of ergodic measures? In this paper we describe sufficient conditions for existence of such a decomposition. It appears, that this decomposition result holds for a special kind of simplexes $\Dom\subseteq \calP(X)$, known as ergodic decomposable simplexes (\cite{Dynkin}, \cite{Kerstan}). The simplex of invariant measures, described above, is an example of an ergodic decomposable simplex. In particular, the following statement is valid.

\begin{theorem}
	\label{1 main theorem}
	Let $(X, {\mathcal A})$, $(Y, {\mathcal B})$ be two polish spaces with Borel $\sigma$-algebras with given continuous actions of the group $(\Z, +)$, ${\mathcal A}^{inv}\subseteq {\mathcal A}$, ${\mathcal B}^{inv}\subseteq {\mathcal B}$ be corresponding $\sigma$-subalgebras of invariant sets,
	$c\colon\, X\times Y \to  \R$ be some lower semicontinuous and bounded below function.
	Then the equality (\ref{example of theorem 1}) is valid,
	where $\tilde{\mu}$ is the restriction of $\mu$ on ${\mathcal A}^{inv}$,  and $\tilde{\nu}$ is the restriction
	of $\nu$ on ${\mathcal B}^{inv}$.
\end{theorem}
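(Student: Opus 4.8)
Set, for $\mu'\in\calP_\Z(X)$ and $\nu'\in\calP_\Z(Y)$, $C(\mu',\nu'):=\inf\{\int c\,d\pi:\pi\in\calP_\Z(X\times Y),\ \Pr_1(\pi)=\mu',\ \Pr_2(\pi)=\nu'\}$. Identifying $\tilde\mu,\tilde\nu$ with the ergodic decomposition (barycentric) measures on $\partial_e(\calP_\Z(X))$, $\partial_e(\calP_\Z(Y))$ — the identification of the restriction to $\calA^{inv}$, resp.\ $\calB^{inv}$, with that barycentric decomposition being precisely what an ergodic decomposable simplex supplies (\cite{Dynkin}, \cite{Kerstan}) — the assertion reads $C(\mu,\nu)=\inf\{\int C(\xi_\alpha,\xi_\beta)\,d\tilde\pi:\tilde\pi\in\tilde\Pi(\tilde\mu,\tilde\nu)\}$, and the plan is to prove the two inequalities separately. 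I will use the following auxiliary facts. Since $c$ is lower semicontinuous and bounded below and the set of $\Z$-invariant plans with given marginals is weakly compact, $C(\mu',\nu')$ is attained and $>-\infty$ (the existence theorem for the Kantorovich problem with linear constraints, \cite{Zaev}). A standard tightness plus weak-lower-semicontinuity argument shows $(\mu',\nu')\mapsto C(\mu',\nu')$ is lower semicontinuous, hence Borel, on $\calP_\Z(X)\times\calP_\Z(Y)$; composing with the Borel map $\alpha\mapsto\xi_\alpha$ (Borel dependence of the ergodic component on the decomposition parameter is part of the ergodic decomposable structure) gives that $(\alpha,\beta)\mapsto C(\xi_\alpha,\xi_\beta)$ is Borel, so the right-hand side is meaningful. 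Finally, a factor of an ergodic system is ergodic, so the marginal maps send $\partial_e(\calP_\Z(X\times Y))$ into $\partial_e(\calP_\Z(X))\times\partial_e(\calP_\Z(Y))$.

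For ``$\ge$'': take any admissible $\pi$ (one exists, e.g.\ $\mu\otimes\nu$, which is $\Z$-invariant) and let $\pi=\int\pi_\omega\,d\hat\pi(\omega)$ be its ergodic decomposition. Each $\Pr_i(\pi_\omega)$ is then ergodic, and $\mu=\int\Pr_1(\pi_\omega)\,d\hat\pi(\omega)$ exhibits $\mu$ as the barycenter of a measure on $\partial_e(\calP_\Z(X))$; by uniqueness of the barycentric representation in a simplex that measure equals $\tilde\mu$, and similarly for $\tilde\nu$. Hence $\tilde\pi:=\big(\omega\mapsto(\Pr_1(\pi_\omega),\Pr_2(\pi_\omega))\big)_{*}\hat\pi\in\tilde\Pi(\tilde\mu,\tilde\nu)$, and by Fubini for the disintegration $\int c\,d\pi=\int\big(\int c\,d\pi_\omega\big)d\hat\pi(\omega)\ge\int C(\Pr_1(\pi_\omega),\Pr_2(\pi_\omega))\,d\hat\pi(\omega)=\int C(\xi_\alpha,\xi_\beta)\,d\tilde\pi(\alpha,\beta)$, which is at least the right-hand side. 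Minimizing over $\pi$ yields $C(\mu,\nu)\ge$ (right-hand side).

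For ``$\le$'': fix $\tilde\pi\in\tilde\Pi(\tilde\mu,\tilde\nu)$, which may be assumed to give a finite integral. The multifunction $(\alpha,\beta)\mapsto\{\pi\in\calP_\Z(X\times Y):\Pr_1(\pi)=\xi_\alpha,\ \Pr_2(\pi)=\xi_\beta,\ \int c\,d\pi=C(\xi_\alpha,\xi_\beta)\}$ has nonempty compact values and Borel graph (using the auxiliary facts), so by the Jankov--von Neumann selection theorem it admits a universally measurable selection $(\alpha,\beta)\mapsto\pi_{\alpha,\beta}$. Put $\pi:=\int\pi_{\alpha,\beta}\,d\tilde\pi(\alpha,\beta)$; this is a probability measure on $X\times Y$, it is $\Z$-invariant as an average of $\Z$-invariant measures, and a disintegration computation gives $\Pr_1(\pi)=\int\xi_\alpha\,d\tilde\pi(\alpha,\beta)=\int\xi_\alpha\,d\tilde\mu(\alpha)=\mu$ and likewise $\Pr_2(\pi)=\nu$. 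Thus $\pi$ is admissible and $C(\mu,\nu)\le\int c\,d\pi=\int\big(\int c\,d\pi_{\alpha,\beta}\big)d\tilde\pi(\alpha,\beta)=\int C(\xi_\alpha,\xi_\beta)\,d\tilde\pi(\alpha,\beta)$; taking the infimum over $\tilde\pi$ gives $C(\mu,\nu)\le$ (right-hand side), completing the proof. (If selecting exact optimizers is inconvenient, use a universally measurable $\varepsilon$-optimal selection and let $\varepsilon\to0$ at the end.)

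I expect the measurable selection in the ``$\le$'' direction to be the main obstacle: it genuinely requires the joint measurability of $(\alpha,\beta)\mapsto C(\xi_\alpha,\xi_\beta)$ — for which lower semicontinuity of $C$ and the Borel structure of the ergodic decomposable simplex are used — together with a selection theorem for the optimal-plan multifunction. The remaining points — identifying the restriction of $\mu$ to $\calA^{inv}$ with its ergodic decomposition, and checking that $\int\pi_{\alpha,\beta}\,d\tilde\pi$ has the claimed marginals and is invariant — are routine consequences of disintegration theory and of the properties of ergodic decomposable simplexes taken as available.
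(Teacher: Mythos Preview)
Your proof is correct and follows essentially the same strategy as the paper: decompose an admissible plan ergodically (using that factors of ergodic systems are ergodic) to obtain the inequality ``$\ge$'', and invoke a measurable selection of optimal plans between ergodic components, integrated against $\tilde\pi$, to obtain ``$\le$''; the paper likewise singles out the selection step as the main technical obstacle. The only noteworthy difference is that the paper uses Rieder's selection theorem to produce a \emph{Borel} selector of optimal plans, whereas you appeal to Jankov--von~Neumann for a universally measurable one---either route suffices here.
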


This theorem is a particular case of the main theorem of the paper (Theorem \ref{main theorem}). For the proof let us consider ergodic decomposition of an optimal invariant transport plan. It is not hard to check, that this decomposition consists of ergodic transport plans with ergodic marginals. It is required to show that almost all plans in this decompositions are optimal. If it is not true, one can replace non-optimal components with the optimal ones and obtain a contradiction. Meanwhile, there is a technical difficulty here: making such a replacement one need to be careful with measurability. The family of transport plans consisting the decomposition should remain to be measurably dependent on marginals. This issue is discussed in details in Section 4 of the paper.

Let $\Dom\subseteq \calP(X)$ be a simplex that admits decomposition of a Kantorovich problem. Denote for shortness $E:=\partial_e(\Dom)\subset \Dom$.
Assume that a distance function $\bar{d}: E\times E \rightarrow \R_{\geq 0}$ is defined on $E$, and that $\bar d$ metricizes the topology of $E$.
If we fix some $p\in [1,\infty)$, we can extend $\bar{d}$ to a distance function $\hat{d_p}$ on $\Dom$ via the formula:
\begin{equation}
\label{extension of distance}
\hat{d_p}(\mu,\nu):=\inf\left\{\left(\int \bar{d}^p(e_1, e_2)d\pi\right)^\frac{1}{p}\colon\ \pi\in {\mathcal P}(E\times E), \mathrm{Pr}_1(\pi)=\tilde{\mu}, \mathrm{Pr}_2(\pi)=\tilde{\nu}\right\},
\end{equation}
where ${\mathcal P}(E\times E)$ is the set of all Borel probability measures on $E\times E$, $\mathrm{Pr}_k$ is an operator sending a measure from $\calP(E\times E)$ to its $k$-th marginal, $\mathrm{bar}(\tilde{\mu})=\mu$, $\mathrm{bar}(\tilde{\nu})=\nu$.

Let us recall the definition of the $L^p$-Wasserstein distance in the case the measures are defined on a compact metric space.
\begin{definition}
	Let $(X,d)$ be a compact metric space, $\calP(X)$ be a set of all Borel (= Radon) probability measures on it. Then for any $p\in [1,\infty)$ $L^p$-Wasserstein distance $W_p:\calP(X)\times \calP(X) \to \R_{\geq 0}$ is a distance function defined by the formula:
	\begin{equation}
	\label{wasserstain distance}
	W_p(\mu,\nu):=\inf\left\{\left(\int d^p(x, y)d\pi\right)^\frac{1}{p}\colon\ \pi\in {\mathcal P}(X\times X),\ \mathrm{Pr}_1(\pi)=\mu,\ \mathrm{Pr}_2(\pi)=\nu\right\}.
	\end{equation}
\end{definition}

Considering invariant Kantorovich problem, it natural to introduce the following analogue $L^p$-Wasserstein distance on $\mathrm{Dom}:={\mathcal P}_\G(X)$:
\begin{equation}
\label{invariant wasserstain distance}
W_p^\G(\mu,\nu):=\inf\left\{\left(\int d^p(x, y)d\pi\right)^\frac{1}{p}\colon\
\pi\in {\mathcal P}_\G(X\times X), \mathrm{Pr}_1(\pi)=\mu, \mathrm{Pr}_2(\pi)=\nu\right\}.
\end{equation}
It is known (but it is not a trivial fact), that in the case $d$ is invariant w.r.t. $\G$, function $W_p^\G$ coincides with $W_p$ on $\Dom$ (see Moameni, \cite{Moameni}). We do not assume invariance of a distance $d$, thus $W_p^\G\geq W_p$ on their common domain of definition.
It can be checked, that $W_p^\G$ actually satisfies every axiom of a distance function on $\calP_\G(X)$ (see Example \ref{example invariant problem 2} for the proof).

\begin{question}
	Let for some $p\in[1,+\infty)$ the distance $W_p^\G$ is defined on $\mathrm{Dom}$. Is it possible to define a distance
	$\bar{d}$ on $E = \partial_e \mathrm{Dom}$ in such a way, that its
	extension $\hat d_p$ on $\mathrm{Dom}$ would coincide with $W_p^\G$?	
\end{question}
Let $\bar{d}_p^\G$ be the restriction of the invariant $L^p$-Wasserstein distance (defined via formula (\ref{invariant wasserstain distance})) to $\Erg(X):=\partial_e(\mathrm{Dom})$, i.e.

\begin{equation}
\label{invariant wasserstein distance on boundary}
(\bar{d}_p^\G)^p(\xi_\alpha, \xi_\beta):=\inf\left\{\int d^p(\xi_\alpha, \xi_\beta)d\pi
\colon\ \pi\in {\mathcal P}_\G(X\times X), \mathrm{Pr}_1(\pi)=\xi_\alpha, \mathrm{Pr}_2(\pi)=\xi_\beta\right\}.\end{equation}
Then it is true, that
$$
W_p^\G(\mu,\nu)=\hat{d_p^\G}(\mu,\nu)
$$
for every $\mu,\nu\in \mathrm{Dom}$. Here $\hat{d_p^\G}$ is the extension of $\bar{d_p^\G}$ (\ref{invariant wasserstein distance on boundary})
from $E:=\partial_e(\mathrm{Dom})$ on $\mathrm{Dom}$ defined by the formula (\ref{extension of distance}).

This fact is a particular case of Theorem \ref{theorem restricted wasserstein distance on a simplex}. It is clear, that it is closely related to the decomposition of the Kantorovich problem.
Denote via $({\mathcal P}_\G(X), W_p^\G(d))$ the space of all invariant probability measures on a metric compact space $(X,d)$, and equip it with the invariant $L^p$-Wasserstein distance.
Since
$$
\partial_e(\mathrm{Dom})\subseteq \mathrm{Dom} = {\mathcal P}_\G(X),
$$
one can restrict $W_p^\G(d)$ on the subsets $\mathrm{Dom}$ and $\partial_e(\mathrm{Dom})$
and obtain metric spaces $(\mathrm{Dom}, W_p^\G(d))$ and $(\partial_e(\mathrm{Dom}), W_p^\G(d))$
respectively.
By the definition of simplex,
$$
\mathrm{Dom} \simeq {\mathcal P}(\partial_e(\mathrm{Dom})),
$$
where ``$\simeq$'' is for homeomorphism. Hence one can construct $L^p$-Wasserstein distance
on ${\mathcal P}(\partial_e(\mathrm{Dom}))$ considering
$(\partial_e(\mathrm{Dom}), W_p^\G(d))$ as the original metric structure.
Denote the constructed metric space via
$({\mathcal P}(\partial_e(\mathrm{Dom})), W_p(W_p^\G(d)))$. Due to the decomposition result,
$$
({\mathcal P}(\partial_e(\mathrm{Dom})), W_p(W_p^\G(d)))=(\mathrm{Dom}, W_p^\G(d)),
$$
where ``$=$'' is for isometric isomorphism.
We shall call results of this type as decompositions of a Wasserstein distance.

We are going to prove a general result about decompositions of transport plans, such that the described decompositions of Wasserstein distances appear to be its particular cases. The main theorems of this paper are Theorem \ref{main theorem} and Theorem \ref{theorem restricted wasserstein distance on a simplex}. The first one establishes a decomposition result for a Kantorovich problem, while the second one describes decomposition for Wasserstein-like distances on simplexes. The preliminary section of the paper contains all required definitions and facts from the theory of simplexes, sufficient statistics, and ergodic decompositions of measures. Describing this theory, we mostly follow \cite{Dynkin} and \cite{Kerstan}.

In Section 3 we define the Kantorovich problem with additional restrictions.
We also formulate notions of ``good'' (in some defined sense) linear restrictions: \textit{weakly regular}, \textit{ergodic decomposable}, \textit{coherent} and \textit{geometric} ones. These notions are important for the formulation of the main results.

In Section 4 we provide a proof of a measurable selection statement, based on the results of Rieder \cite{Rieder} about measurability of solutions in optimization problems.

In Section 5 we formulate and prove the main statement about existence of the decomposition of a Kantorovich problem in the case an additional restriction is ergodic decomposable, weakly regular, and coherent. In Section 6 we prove the result about decomposition of Wasserstein-like distances in the case an additional restriction is geometric (in addition to the previous assumptions).

We also describe some possible applications of the obtained results. These applications are closely related to the study of symmetric and invariant modifications of Kantorovich problem, which are studied, for example, in \cite{Beigl}, \cite{ColMar}, \cite{KolZaev}, \cite{Zaev}.
 	
\section{Preliminaries: simplexes and disintegration of measures}
\sectionmark{Preliminaries: simplexes and disintegration of measures}
In this section we briefly discuss definitions and results from the theory of simplexes, ergodic decompositions and disintegrations of measures, that will be used in the rest of the paper.

The standard way to define a notion of infinite-dimensional simplex is the approach of Choquet.

\begin{definition}
	A point $x\in K$ of a convex compact set $K$ is extreme iff for any two points $a, b\in K$, such that $x=t\cdot a + (1-t)\cdot b$ for some
	$t\in (0,1)$, it follows that $a=b$.
	Compact convex metrizable set $K$ is a \textbf{Choquet simplex} iff for every element $a\in K$ of this set there is a \textbf{unique} Borel measure $\mu_a$ on $K$ such that
	\begin{itemize}
		\item $\mu_a(\partial_e(K))=1$, where $\partial_e(K)$ is a subset consisted of all extreme points of $K$,
		\item $a=\mathrm{bar}(\mu_a):=\int_K x d \mu_a$.
	\end{itemize}
\end{definition}
\textbf{Affine map} $T: K_1\rightarrow K_2$ between two convex sets is a map with the property $T(\alpha\cdot a + (1-\alpha)\cdot b)=\alpha\cdot T(a)+(1-\alpha)\cdot T(b)$ for all $\alpha\in[0,1]$, $a,b\in K_1$. Two simplexes are isomorphic iff there exists an affine homeomorphism between them. There is only one $n$-dimensional Choquet simplex up to isomorphism for any finite $n$.

\textbf{Bauer simplex} is a Choquet simplex with closed (hence compact) Choquet boundary. There are infinitely many non-isomorphic Bauer simplexes. \textbf{Poulsen simplex} is a Choquet simplex with dense Choquet boundary. There is only one (up to isomorphism) Poulsen simplex (see \cite{Phelps} for details).

\begin{example}
	Let $(X,d)$ be a compact metric space, $\calP(X)$ be the set of all probability measures equipped with the weak$^\ast$-topology, induced by the embedding $\calP(X)\subset C(X)^\ast$. Then $\calP(X)$ is a Bauer simplex, its Choquet boundary consists of all Dirac measures. Every Bauer simplex is isomorphic to $\calP(X)$ for some compact metrizable space $X$ (see \cite{Batty}).
\end{example}

\begin{example}
	Let $(X,d)$ be a compact metric space, $G$ be an amenable group, acting on $X$ by homeomorphisms, $\calP_G(X)$ be the set of all invariant probability measures equipped with the weak$^*$-topology, induced by the embedding $\calP(X)\subset C(X)^\ast$. Then $\calP_G(X)$ is a Choquet simplex, its Choquet boundary consists of all ergodic measures. Every Choquet simplex is isomorphic to $\calP_G(X)$ for some compact metrizable $X$ and some continuous action of the group $\Z$ (see \cite{Cortez}, \cite{Downarowicz}).
\end{example}

\begin{example}
	Let $\calC$ be a $C^\ast$-algebra, $S(\calC)$ be its state space, i.e. set of all positive continuous linear functionals of norm one, $S(\calC)$ be equipped with weak$^\ast$-topology, induced by the embedding $S(\calC)\subset \calC^\ast$. Then $S(\calC)$ is a compact convex set. It is a Choquet simplex if and only if $\calC$ is a commutative algebra (see \cite{Batty}).
\end{example}

\begin{example}
	Let $(X,d)$ be a Polish metric space, $\calP(X)$ be the set of all probability measures equipped with the weak$^\ast$-topology, induced by the embedding $\calP(X)\subset C(X)^\ast$. Then $\calP(X)$ is a Choquet (and Bauer) simplex if and only if $X$ is compact.
\end{example}

It can be noted, that this approach to the notion of simplex has two substantial disadvantages:
\begin{itemize}
	\item At first, all Choquet simplexes are assumed to be compact sets by the definition. It is convenient in many cases, but spaces of measures, being equipped with Kantorovich metric, are not compact in general.
	
	\item The second, and, arguably, more important ``contra'' for the Choquet theory:  it does not link explicitly the representation of an element of simplex as a mixture of the extreme points with an ergodic decomposition of the measure corresponding to the element.
\end{itemize}

The approach, introduced by Dynkin in \cite{Dynkin} and developed subsequently by several authors in \cite{Kerstan} is free of these disadvantages. The notion of simplex provided by Dynkin is a generalization of the one introduced by Choquet. It is formulated in purely measure-theoretic terms and does not require any topological assumptions. For a special subclass of Dynkin simplexes (that are called \textit{ergodic decomposable} in \cite{Kerstan}) there is a result that connects representation of a measure as a mixture of extremes with its disintegration w.r.t. some appropriate $\sigma$-subalgebra. Conditional measures of this disintegration correspond to the extreme points of a simplex.

Let us denote via $B(X,\calA)$ the set of all $\calA$-measurable bounded real-valued functions on $X$.
\begin{definition}
	Let $(X,\calA)$ be a measurable space, $M\subset\calP(X)$ is a subset of probability measures. The smallest $\sigma$-algebra on $M$, such that for any $f\in B(X,\calA)$, $\mu\in M$ the map $\mu\rightarrow \mu(f)$ is measurable, is called a \textbf{standard} $\sigma$-algebra.
\end{definition}

\begin{definition}
	\label{measurable convex set of probability measures}
	Let $(X,{\mathcal A})$ be a measurable space, $M\subseteq{\mathcal P}(X)$ be a subset equipped with a standard $\sigma$-algebra.
	Let us define \textbf{barycenter} $\mathrm{bar}\colon\ {\mathcal P}(M)\to M$ by the formula
	$$
	\mathrm{bar}(\tilde{\mu})(f)=\int_M \left(\int_X f d\nu \right)d\tilde{\mu}(\nu),\ \forall f\in B(X,{\mathcal A}).
	$$
	The \textbf{boundary} of $(M, {\mathcal B})$ is a set $\partial_e(M)\subset M$ of such points $m$,
	that for any measure $\mu\in {\mathcal P}(M)$ the property $\mathrm{bar}(\mu)=m$ implies $\mu_m(\{m\})=1$.
	A measurable space $(M, {\mathcal B})$ is called Dynkin simplex,
	iff its boundary $\partial_e(M)$ is measurable, and for any $\mu\in M$ there exists a unique $\tilde{\mu}\in {\mathcal P}(M)$ s.t. $\mathrm{bar}(\tilde{\mu})=\mu$
	and $\tilde{\mu}(\partial_e(M))=1$.
\end{definition}

\begin{example}
	\label{example 1 of simplex}
	If $M:={\mathcal P}(X)$, then it is a Dynkin simplex with boundary $\partial_e(M)$ consisted of all Dirac measures.
\end{example}

\begin{example}
	\label{example 2 of simplex}
	Let $(X,\calA)$ be a measurable space, $\G$ be an amenable group, acting on $X$ by measurable transformations, $\calP_\G(X)$ is a set of all invariant measures. 
	If $M=\calP_\G(X)$,  $\calB$ is a $\sigma$-algebra generated by evaluations, then $(M,\calB)$ is a Dynkin simplex, its Dynkin boundary consists of all ergodic measures. 
	Since any Choquet simplex is isomorphic to $\calP_\G(X)$ for some compact metrizable $X$ and some continuous action of the group $\Z$ (see. \cite{Cortez}, \cite{Downarowicz}), every Choquet simplex is a Dynkin simplex, up to an isomorphism. 
\end{example}
Let $(X,\calA)$ be some measurable space.
\begin{definition}
	A function $Q:X\times \calA \rightarrow [0,1]$ is called \textbf{Markov kernel} on $(X,\calA)$ iff for each $x\in X$, $Q^x:=Q(x,\cdot)$ is a probability measure on $\calA$, and for each $A\in \calA$, $x\rightarrow Q(x,A)$ is a $\calA$-measurable function.
\end{definition}
We call two Markov kernels $Q_1$, $Q_2$ on $(X,\calA)$ $M$-equivalent for some $M\subseteq \calP(X)$ iff $\mu(\{x: Q_1(x,\cdot)=Q_2(x,\cdot) \})=1$ for all $\mu\in M$.

\begin{definition}
	Let $(X,\calA)$, $(Y, \calB)$ be two measurable spaces. A function $Q:Y\times \calA \rightarrow [0,1]$ is called \textbf{Markov transition kernel} from $(X,\calA)$ to $(Y,\calB)$ iff for each $y\in Y$, $Q^y:=Q(y,\cdot)$ is a probability measure on $(X,\calA)$, and for each $A\in \calA$, $y\rightarrow Q(y,A)$ is a $\calB$-measurable function.
\end{definition}
We call two Markov transition kernels $Q_1$, $Q_2$ from $(X,\calA)$ to $(Y,\calB)$ $M$-equivalent for some $M\subseteq \calP(Y)$ iff $\mu(\{y: Q_1(y,\cdot)=Q_2(y,\cdot) \})=1$ for all $\mu\in M$.
Each Markov kernel $Q$ defines a positive operator on $B(X,\calA)$ by the formula
$$
Q(f)(x):=\int f dQ(x,\cdot).
$$
\begin{definition}
	Let $M\subseteq \calP(X,\calA)$, $\calA^0$ be a $\sigma$-subalgebra of $\calA$.
	A Markov kernel $Q$ on $(X,\calA)$ is called a \textbf{decomposition} for a triple $(\calA,\calA^0,M)$ iff 
	$$
	\E_\mu(f|\calA^0)=Q(f)\ \text{a.e. x w.r.t. $\mu$}
	$$
	for all $f\in B(X,\calA)$, $\mu \in M$. Here $\E_\mu(f|\calA^0)$ is a conditional expectation of $f$ with respect to $\sigma$-algebra $\calA^0$ and measure $\mu$.
\end{definition}
Note, that a decomposition for a triple $(\calA,\calA^0,M)$ is by definition a Markov transition kernel from $(X,\calA)$ to $(X,\calA^0)$.

Define the action of Markov kernel on $\calP(X,\calA)$ as follows:
$$
Q_\#(\mu)(A):=\int_X Q(x,A) d\mu\ \forall A\in \calA.
$$
A measure $\mu\in \calP$ is said to be invariant w.r.t. $Q$ iff $Q_\#(\mu)=\mu$.

Analogously, for any Markov transition kernel there is a map from $B(X,\calA)$ to $B(Y,\calB)$:
$$
Q(f)(y):=\int f(x) dQ(y,\cdot)
$$
and a map from $\calP(Y,\calB)$ to $\calP(X,\calA)$
$$
Q_\#(\nu)(A):=\int_Y Q(y,A) d\nu\ \forall A\in \calA.
$$

\begin{definition}
	$M\subseteq \calP(X)$ is \textbf{separable} iff $\calA$ contains a countable family $\calF$ of subsets, such that for each pair of different measures $\mu_1, \mu_2 \in M$ there exists $A\in \calF$ s.t. $\mu_1(A)\neq \mu_2(A)$. 
\end{definition}

\begin{definition}
	A $\sigma$-algebra $\calA^0\subseteq \calA$ is called \textbf{sufficient} for a separable $M\subseteq \calP(X)$, if there is a Markov kernel $Q$ on $(X,\calA)$, such that it is a decomposition for a triple $(\calA,\calA^0,M)$.
\end{definition}

\begin{definition}
	A sufficient $\sigma$-algebra is called \textbf{$H$-sufficient} for $M$ if it is sufficient and 
	$$
	\mu(\{x: Q^x\in M\})=1\ \forall \mu\in M.
	$$
\end{definition}
We call two $\sigma$-subalgebras $\calA^1\subseteq \calA$, $\calA^2\subseteq \calA$ $M$-equivalent for some $M\subseteq\calP(X,\calA)$ iff for any $\mu\in M$, any $A_1\in \calA^1$ there exists $A_2\in \calA^2$ s.t. $A_1=A_2$ a.e. w.r.t. $\mu$.

The following important theorem is the union of the statements of Theorem 3.1, Theorem 3.2, Theorem 3.3 of \cite{Dynkin} and Lemma 3.6 of \cite{Kerstan}.
\begin{theorem}
	\label {equivalent definitions of ergodic decomposable simplex}
	Assume that $\calA$ is a countably generated $\sigma$-algebra. Then for a separable set $M\subseteq \calP(X,\calA)$, the following properties are equivalent.
	\begin{itemize}
		\item There exists a unique, up to $M$-equivalence, $H$-sufficient $\sigma$-algebra $\calA^0\subseteq \calA$ for $M$, which is the $\sigma$-algebra of all sets $A\in \calA$ s.t. $\mu(A)=0$ or $\mu(A)=1\ $ $\forall \mu \in M$.
		\item There exists a Markov kernel $Q$ on $(X,\calA)$ with the property:
		$$
		Q(g Q(f))=Q(g)Q(f)\ \forall f,g\in B(X,\calA),
		$$
		or, equivalently, with the property
		$$
		Q^x(\{y\in X: Q^x=Q^y \})=1\ \forall x\in X,
		$$	
		such that $M$ is a set of all $Q$-invariant elements of $\calP(X,\calA)$.
	\end{itemize}
	For a Markov kernel from the proposition, the corresponding $H$-sufficient algebra is the algebra (up to $M$-equivalence) of all $Q$-invariant measurable sets.
\end{theorem}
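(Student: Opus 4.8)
\emph{Proof plan.} This statement is a synthesis of Theorems~3.1--3.3 of \cite{Dynkin} and Lemma~3.6 of \cite{Kerstan}; I would establish the two implications separately, the crux being a dictionary between $H$-sufficient $\sigma$-algebras for $M$ and Markov kernels obeying the multiplicative identity, together with a rigidity statement that pins the sufficient $\sigma$-algebra down.

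For the implication from the first property to the second, I would take the $H$-sufficient $\sigma$-algebra $\calA^0$ and a Markov kernel $Q$ realizing a decomposition for $(\calA,\calA^0,M)$, so that $Q(f)=\E_\mu(f|\calA^0)$ $\mu$-a.e.\ for every $f\in B(X,\calA)$ and $\mu\in M$. The multiplicative identity is then the ``pull-out'' property of conditional expectation: since $Q(f)$ agrees $\mu$-a.e.\ with an $\calA^0$-measurable function, $\E_\mu\!\big(g\,\E_\mu(f|\calA^0)|\calA^0\big)=\E_\mu(g|\calA^0)\,\E_\mu(f|\calA^0)$, which is $Q(gQ(f))=Q(g)Q(f)$ $\mu$-a.e.; holding this over all $\mu\in M$ and using separability of $M$ with countable generation of $\calA$, one modifies $Q$ on an $M$-null \emph{measurable} set so the identity holds pointwise, and the reformulation $Q^x(\{y:Q^x=Q^y\})=1$ is then routine. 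To identify $M$ with the $Q$-invariant measures: if $\mu\in M$ then $Q_\#\mu(A)=\int Q(x,A)\,d\mu=\int\E_\mu(\mathbf{1}_A|\calA^0)\,d\mu=\mu(A)$, so every element of $M$ is $Q$-invariant; conversely, $H$-sufficiency presents each $\mu\in M$ as a mixture over $\partial_e(M)$ of the ``ergodic'' measures $Q^x$, and a direct computation shows that any $Q$-invariant measure must be such a mixture, hence lies in $M$.

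For the reverse implication I would, given $Q$ with the multiplicative identity, set $\calA^0:=\{A\in\calA:Q(\mathbf{1}_A)=\mathbf{1}_A\}$, the $\sigma$-algebra of $Q$-invariant sets. Putting $g\equiv 1$ gives $Q(Q(f))=Q(f)$, so each $Q(f)$ is $Q$-invariant, hence $\calA^0$-measurable; and for $A\in\calA^0$ one checks $Q(\mathbf{1}_A f)=\mathbf{1}_A\,Q(f)$ pointwise, using that $Q^x$ is carried by $A$ when $x\in A$ and by $A^c$ when $x\notin A$. Hence, for $\mu\in M$ (which is $Q$-invariant), $\int_A f\,d\mu=\int Q(\mathbf{1}_A f)\,d\mu=\int_A Q(f)\,d\mu$, and together with the measurability this says exactly that $Q$ is a decomposition for $(\calA,\calA^0,M)$, so $\calA^0$ is sufficient. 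It is $H$-sufficient because each $Q^x$ is itself $Q$-invariant (again by the multiplicative identity) and ``ergodic'', hence lies in $M$, giving $\mu(\{x:Q^x\in M\})=1$ for all $\mu\in M$.

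The step I expect to be the main obstacle is the rigidity: that the sufficient $\sigma$-algebra just built is unique up to $M$-equivalence and coincides, up to $M$-equivalence, with the $\sigma$-algebra described in the statement (and, for the last sentence, with the algebra of $Q$-invariant sets). The plan here is to show that if $\calA^1$ is any $H$-sufficient $\sigma$-algebra with kernel $Q_1$, then $Q$ and $Q_1$ are $M$-equivalent, whence so are $\calA^0$ and $\calA^1$; this is precisely where countable generation of $\calA$ and separability of $M$ enter, through an approximation/martingale-type argument identifying $Q_1(\mathbf{1}_A)$ with $Q(\mathbf{1}_A)$ $\mu$-a.e.\ on a fixed countable generating family. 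The concluding identification then amounts to checking that a $Q$-invariant set is, modulo $M$-null sets, precisely a set that is trivial for the ergodic components $Q^x$, which is the content of the theorem's final assertion.
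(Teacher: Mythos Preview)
The paper does not give its own proof of this theorem: it is stated in the preliminaries as a direct quotation of known results, with the sentence ``The following important theorem is the union of the statements of Theorem~3.1, Theorem~3.2, Theorem~3.3 of \cite{Dynkin} and Lemma~3.6 of \cite{Kerstan}'' serving in lieu of any argument. Your proposal therefore goes strictly beyond what the paper does, and there is nothing in the paper to compare it against.

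That said, your sketch is a faithful outline of how the cited Dynkin--Kerstan results fit together: the dictionary between $H$-sufficient $\sigma$-algebras and multiplicative Markov kernels via the pull-out property of conditional expectation, the identification of $M$ with the $Q$-invariant measures, and the uniqueness of the sufficient $\sigma$-algebra up to $M$-equivalence using countable generation and separability. If you intend this as an expository expansion rather than an original proof, it is appropriate; just be aware that the paper itself treats the result as a black box.
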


\begin{definition}
	A separable set $M\subseteq \calP(X,\calA)$ of measures on a countably generated $\sigma$-algebra $\calA$ is called \textbf{ergodic decomposable simplex} iff any of the equivalent properties of Theorem \ref{equivalent definitions of ergodic decomposable simplex} is satisfied. 
\end{definition}

In \cite{Dynkin} Dynkin provide a series of examples of ergodic decomposable simplexes. In particular, the simplexes from examples \ref{example 1 of simplex}, \ref{example 2 of simplex} are ergodic decomposable.

The following result is a direct consequence of Theorem 3.1 from \cite{Dynkin} and Remark 3.8 from \cite{Kerstan}.
\begin{theorem}
	Ergodically decomposable simplex $M\subseteq \calP(X,\calA)$ is a Dynkin simplex. Moreover, there exists such a Markov kernel $Q$ from the definition of ergodic decomposable simplex, that Dynkin boundary is defined as following:
	$$
	\partial_e(M)=\{Q^x: x\in X\}
	$$
	and 
	$$
	\mathrm{bar}(\tilde{\mu})=\mu \iff \tilde{\mu}(S)=\mu(\{x: Q^x\in S\})
	$$
	for any measurable $S\subseteq M$.
\end{theorem}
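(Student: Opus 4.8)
The plan is to organize the whole proof around the \emph{canonical map} $\Phi\colon X\to\calP(X)$, $\Phi(x):=Q^x$, where $Q$ is a Markov kernel as in the second bullet of Theorem~\ref{equivalent definitions of ergodic decomposable simplex}, so that $M$ is exactly the set of $Q$-invariant measures and $Q^x(\{y\colon Q^y=Q^x\})=1$ for every $x$. First I would record two easy facts: $\Phi$ is measurable into $\calP(X)$ with its standard $\sigma$-algebra (since $x\mapsto Q(x,A)$ is $\calA$-measurable for each $A$), and $\Phi$ in fact lands in $M$, because integrating $Q_\#(Q^x)(A)=\int Q(y,A)\,Q(x,dy)$ over the $Q^x$-full set $\{y\colon Q^y=Q^x\}$ gives $Q_\#(Q^x)=Q^x$. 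Put $E:=\Phi(X)\subseteq M$. For $\mu\in M$ I then set $\tilde\mu:=\Phi_\#\mu\in\calP(M)$, i.e.\ $\tilde\mu(S)=\mu(\{x\colon Q^x\in S\})$; then $\tilde\mu(E)=1$, and $\mathrm{bar}(\tilde\mu)(f)=\int_X Q(f)\,d\mu=\int_X f\,d(Q_\#\mu)=\mu(f)$, so $\mathrm{bar}(\tilde\mu)=\mu$. This simultaneously produces a representing measure concentrated on $E$ and the barycenter formula of the statement.

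Next I would show $E=\partial_e(M)$ in the sense of Definition~\ref{measurable convex set of probability measures}. For $E\subseteq\partial_e(M)$, fix $Q^{x_0}\in E$ and let $B:=\{y\colon Q^y=Q^{x_0}\}$, so $Q^{x_0}(B)=1$. If $\lambda\in\calP(M)$ satisfies $\mathrm{bar}(\lambda)=Q^{x_0}$, then $1=Q^{x_0}(B)=\mathrm{bar}(\lambda)(\1_B)=\int_M\nu(B)\,d\lambda(\nu)$ forces $\nu(B)=1$ for $\lambda$-a.e.\ $\nu$; and a $Q$-invariant $\nu$ with $\nu(B)=1$ satisfies $\nu(A)=\int_B Q^x(A)\,d\nu(x)=Q^{x_0}(A)$, hence $\nu=Q^{x_0}$, so $\lambda=\delta_{Q^{x_0}}$ and $Q^{x_0}\in\partial_e(M)$. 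Conversely, if $m\in\partial_e(M)$ then $\tilde m=\Phi_\#m$ is a representing measure for $m$, so by definition $\tilde m(\{m\})=1$; since also $\tilde m(E)=1$, we get $m\in E$. (This uses that singletons of $M$ are measurable, which I would deduce from $\calA$ being countably generated.)

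It then remains to prove measurability of $\partial_e(M)=E$ and uniqueness of the representing measure. For measurability, fix a countable $\pi$-system $\{A_n\}$ generating $\calA$; the set $D:=\{(m,x)\in M\times X\colon Q^x=m\}=\bigcap_n\{(m,x)\colon Q(x,A_n)=m(A_n)\}$ lies in $\calB\otimes\calA$, a functional monotone class argument shows $m\mapsto m(\{x\colon (m,x)\in D\})$ is $\calB$-measurable, and $E=\{m\in M\colon m(\{x\colon Q^x=m\})=1\}$ is its $1$-level set. For uniqueness, suppose $\lambda\in\calP(M)$ has $\lambda(E)=1$ and $\mathrm{bar}(\lambda)=\mu$, and take any measurable $S\subseteq M$. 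The crucial observation is that $\Phi^{-1}(S)=\{y\colon Q^y\in S\}$ is $Q$-invariant: if $Q^x\in S$ it contains $\{y\colon Q^y=Q^x\}$, and if $Q^x\notin S$ it is disjoint from that set. Hence $\nu(\Phi^{-1}(S))=\1_S(\nu)$ for every $\nu\in E$, and integrating against $\lambda$ gives $\lambda(S)=\mathrm{bar}(\lambda)(\1_{\Phi^{-1}(S)})=\mu(\{x\colon Q^x\in S\})=\tilde\mu(S)$, so $\lambda=\tilde\mu$.

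The argument is in essence an assembly of Theorem~\ref{equivalent definitions of ergodic decomposable simplex} with the cited results of Dynkin and Kerstan, and most of the work above is bookkeeping around the identity $Q^x(\{y\colon Q^y=Q^x\})=1$. The one genuinely delicate step, and the place where the hypothesis that $\calA$ is countably generated is used, is the measurability of the boundary $E$; everything else I expect to be routine.
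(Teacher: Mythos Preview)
Your argument is correct and in fact more detailed than what the paper provides: the paper does not give a self-contained proof of this statement at all, but simply records it as ``a direct consequence of Theorem 3.1 from \cite{Dynkin} and Remark 3.8 from \cite{Kerstan}.'' What you have written is essentially a clean reconstruction of those cited arguments, organized around the canonical map $\Phi(x)=Q^x$ and the key identity $Q^x(\{y\colon Q^y=Q^x\})=1$.

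A couple of small remarks on presentation. First, in the step $\partial_e(M)\subseteq E$ you invoke $\tilde m(E)=1$ before having established measurability of $E$; this is harmless, since the conclusion $\tilde m(\{m\})=m(\{x\colon Q^x=m\})=1$ already forces the set $\{x\colon Q^x=m\}$ to be nonempty and hence $m\in E$, without any appeal to $E$ being measurable. Second, when you call $\Phi^{-1}(S)$ ``$Q$-invariant'' in the uniqueness step, what you actually use (and correctly verify) is the $0$--$1$ law $\nu(\Phi^{-1}(S))=\1_S(\nu)$ for every $\nu\in E$; you might phrase it that way to avoid ambiguity with the various notions of invariance floating around. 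Neither point affects the validity of the proof.
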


The following statement establishes a coincidence (in the case of a Polish space $X$) of the Borel $\sigma$-algebra generated by the topology of weak convergence on $\calP(X)$, and the standard $\sigma$-algebra on $\calP(X)$.
\begin{proposition}
	\label{relation between weak and evaluation algebras}
	Let $(X,\calA)$ be a Polish space with Borel $\sigma$-algebra. Then the following three $\sigma$-algebras on $\calP(X)$ coincide:
	\begin{enumerate}
		\item the standard $\sigma$-algebra $\calE$,
		\item $\sigma$-algebra, generated by all sets of the form $\{\mu\in \calP(X): a\leq \mu(A)\leq b\}$, $A\in \calA$, $a, b\in\Q\cap[0,1]$,
		\item Borel $\sigma$-algebra, generated by the topology of weak convergence on $\calP(X)$.
	\end{enumerate}
\end{proposition}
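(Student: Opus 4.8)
The plan is to prove the four inclusions $(2)\subseteq(1)$, $(1)\subseteq(2)$, $(3)\subseteq(1)$, $(1)\subseteq(3)$, which together give the coincidence of all three $\sigma$-algebras; concretely I will establish $(1)=(2)$ by one application of the functional monotone class theorem, and $(1)=(3)$ by a topological reduction together with a second application of it. Write $\calE_1,\calE_2,\calE_3$ for the three $\sigma$-algebras on $\calP(X)$ listed in the statement, and for $f\in B(X,\calA)$ let $\mathrm{ev}_f\colon\calP(X)\to\R$, $\mathrm{ev}_f(\mu)=\mu(f)$, so that $\calE_1$ is by definition the smallest $\sigma$-algebra making every $\mathrm{ev}_f$ measurable.

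For $(1)=(2)$: the inclusion $\calE_2\subseteq\calE_1$ is immediate, since $\{\mu:a\le\mu(A)\le b\}=\mathrm{ev}_{\mathbf 1_A}^{-1}([a,b])$ and $\mathbf 1_A\in B(X,\calA)$. For the converse, let $\calH$ be the set of $f\in B(X,\calA)$ such that $\mathrm{ev}_f$ is $\calE_2$-measurable. For any $A\in\calA$ and $t\in\R$ one has $\{\mu:\mu(A)<t\}=\bigcup_{q\in\Q\cap[0,t)}\{\mu:0\le\mu(A)\le q\}\in\calE_2$, so $\mathbf 1_A\in\calH$, and in particular the constants lie in $\calH$. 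Moreover $\calH$ is a vector space, it is closed under uniform limits, and it is closed under bounded monotone limits: if $0\le f_n\uparrow f$ with $f$ bounded, then $\mathrm{ev}_{f_n}\to\mathrm{ev}_f$ pointwise on $\calP(X)$ by monotone convergence, and a pointwise limit of measurable functions is measurable. Since the indicators $\mathbf 1_A$, $A\in\calA$, form a multiplicative class generating $\calA$, the functional monotone class theorem gives $\calH=B(X,\calA)$; hence every $\mathrm{ev}_f$ is $\calE_2$-measurable and $\calE_1\subseteq\calE_2$.

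For $(1)=(3)$, I first invoke the standard fact that for a Polish space $X$ the space $\calP(X)$ with the topology of weak convergence is again Polish, in particular second countable, and that there is a countable family $\{f_n\}_{n\in\N}\subseteq C_b(X)$ such that the weak topology is the initial topology generated by the maps $\mathrm{ev}_{f_n}$. Then $\calE_3=\sigma(\mathrm{ev}_{f_n}:n\in\N)$, and since each $f_n$ lies in $B(X,\calA)$ this gives $\calE_3\subseteq\calE_1$. For the reverse inclusion, let $\calH'$ be the set of $f\in B(X,\calA)$ such that $\mathrm{ev}_f$ is Borel measurable for the weak topology. By the definition of weak convergence $\mathrm{ev}_f$ is continuous, hence Borel, for every $f\in C_b(X)$; the class $C_b(X)$ is multiplicatively closed, contains the constants, and generates the Borel $\sigma$-algebra $\calA$ of the metrizable space $X$ (for closed $F\subseteq X$ the functions $x\mapsto\min(1,n\,d(x,F))$ lie in $C_b(X)$ and increase to $\mathbf 1_{X\setminus F}$). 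As before $\calH'$ is a vector space closed under bounded monotone limits, so the functional monotone class theorem yields $\calH'=B(X,\calA)$, whence $\calE_1\subseteq\calE_3$.

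Combining the two parts gives $\calE_1=\calE_2=\calE_3$. The only non-formal ingredient is the topological input in the second part — that $\calP(X)$ with the weak topology is second countable and that its Borel structure is generated by countably many continuous evaluations; everything else is the two applications of the functional monotone class theorem, where the points that need a little care are the stability of $\calH$ and $\calH'$ under bounded monotone limits and the verification that the chosen multiplicative families $\{\mathbf 1_A:A\in\calA\}$ and $C_b(X)$ genuinely generate $\calA$.
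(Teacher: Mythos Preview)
Your argument is correct. For the equality $(1)=(2)$ your approach and the paper's are essentially the same idea: both reduce the measurability of $\mu\mapsto\mu(f)$ for general $f\in B(X,\calA)$ to the case of indicators $\mathbf 1_A$, the paper via the explicit representation of the Lebesgue integral as a supremum over simple functions, you via the functional monotone class theorem (which is arguably cleaner, since the paper's pointwise supremum is over an uncountable family and tacitly relies on the existence of a countable monotone approximating sequence anyway). The real difference is in the step $(1)=(3)$: the paper does not argue this at all but simply cites Theorem~2.3 of Gaudard--Hadwin, whereas you give a self-contained proof by combining the standard topological fact that the weak topology on $\calP(X)$ is generated by countably many evaluations $\mathrm{ev}_{f_n}$, $f_n\in C_b(X)$ (so that $\calE_3=\sigma(\mathrm{ev}_{f_n})\subseteq\calE_1$), with a second monotone class argument over the multiplicative class $C_b(X)$ (giving $\calE_1\subseteq\calE_3$). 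Your route is longer but independent of the external reference; the paper's is shorter but outsources the substantive part.
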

\begin{proof}
	Let us show first, that $\calE$ can be generated by all functionals of the form $\mu\rightarrow \mu(A)$, $A\in \calA$. It is obvious, that if all maps $\mu \rightarrow \mu(A)$ are measurable, then the map $\mu \rightarrow \mu(s):=\int s d\mu$ is measurable for every step-function $s$ (step function is a linear combination of a finite number of measurable indicator functions). By definition of Lebesgue integration, $\mu(f):=\int f d\mu:=\sup\{\mu(s): 0\leq s\leq f^{+}\}-\sup\{\mu(s): 0\leq s\leq f^{-}\}$ (where $f=f^{+}-f^{-}$, $f^{+}, f^{-}$ are non-negative and measurable, $s$ is a measurable step-function) and its measurability follows from the classic fact, that a pointwise supremum of measurable maps is measurable. 
	
	Since the family of all intervals with rational endpoints in $[0,1]$ generates Borel $\sigma$-algebra on $[0,1]$, the standard $\sigma$-algebra coincides with the $\sigma$-algebra generated by all sets of the form $\{\mu\in \calP(X): a\leq \mu(A)\leq b\}$, $A\in \calA$, $a, b\in\Q\cap[0,1]$.
	
	The equivalence of this $\sigma$-algebra with the Borel one (w.r.t. topology of weak convergence) was proved in \cite{Gaudard} (Theorem 2.3).
\end{proof}
%
%
%

Let us discuss two important examples of ergodic decomposable simplexes.
\begin{example}[Main example]
	\label{example invariant problem 1}
	Let $\mathbb G$ be locally compact amenable group with a fixed continuous action on a Polish metric space $(X,d)$.
	An action of $\mathbb G$ on $X\times X$ is defined in the ``diagonal''
	way: $g(x_1,x_2):=(g(x_1),g(x_2))$, where $g$ is an action of the element $g\in \mathbb G$ on $X$. A measure $\mu$ on $(X,d)$ is called \textit{invariant}
	w.r.t. $\mathbb G$ iff $\mu\circ g^{-1}=\mu$ for every $g \in \mathbb G$.
	Denote via ${\mathcal P}_{\mathbb G}(X)\subseteq{\mathcal P}(X)$
	the set of all Borel invariant probability measures on $(X,d)$. 
	It is known that ${\mathcal P}_{\mathbb G}(X)$ is a closed ergodic decomposable simplex
	(see Sections 6 and 7 in \cite{Dynkin} for the proof). The corresponding
	$H$-sufficient $\sigma$-algebra can be defined as an algebra ${\mathcal A}^{inv}$
	of all Borel $\mathbb G$-invariant measurable subsets.
	The corresponding Markov kernel $Q$ on $(X,{\mathcal A})$ is defined as
	$$
	\tilde{Q}(x,A):=\lim_{n\to  \infty} \frac{1}{\mu(F_n)} \int_{F_n} {g}_{\#} (\delta_x(A)) d\nu(g).
	$$
	Here $(F_n)$ is a F\"olner sequence for $\mathbb G$, $\nu$ is a left Haar measure on $\mathbb G$, $\delta_x$ is the Dirac measure concentrated in $x\in X$.
	Recall that a F\"olner sequence for $\mathbb G$ is a sequence of nonempty compact subsets of $\G$, such that
	$$
	\lim_{n\to\infty}\frac{\mu(gF_n\bigtriangleup F_n)}{\mu(F_n)}= 0
	$$
	for each
	$g\in \mathbb G$, where $gF_n:=\{gf\colon\  f\in F_n\}$, and $\bigtriangleup$ is the symmetric difference.
\end{example}

\begin{example}[Discrete-time Markov process]
	\label{example markov process}
	Let $(X,{\mathcal A})$ be a Polish space with Borel $\sigma$-algebra,
	$Q\colon\  X\times {\mathcal A}\to  [0,1]$ be a Markov kernel with the property $Q(f Q(g))=Q(f) Q(g)$. The sequence of probability measures $\mu_n:=Q^n(\mu_0)$, $n\in \N$, is called a discrete-time Markov process with the initial distribution $\mu_0$ and the transition kernel $Q$.
	
	In this case the set of all $Q$-invariant probability measures on $X$ (stationary distributions) $\calP_{Q}(X)\subseteq \calP(X)$ is an ergodic decomposable simplex. The simplex is closed if the corresponding Markov operator $Q: \calP(X)\rightarrow \calP(X)$ is weak$^*$-continuous.
	Its $H$-sufficient $\sigma$-algebra is equivalent to the algebra $\calA_{Q}$ of all Borel $Q$-invariant subsets (i.e. such $A\in \calA$, that $Q(I_A)=I_A$, where $I_A$ is an indicator function of $A$). See Section 9 of \cite{Dynkin} for the generalization of this example to a continuous-time case.
\end{example}

\section{Kantorovich problem with additional linear restrictions}
\sectionmark{Kantorovich problem with additional linear restrictions}
Let $(X,{\mathcal A})$, $(Y,{\mathcal B})$ be two Polish spaces with Borel $\sigma$-algebras.
Denote via ${\mathcal P}(X)$, ${\mathcal P}(Y)$, ${\mathcal P}(X\times Y)$
the sets of all probability measures on $(X,{\mathcal A})$, $(Y,{\mathcal B})$
and $(X\times Y, {\mathcal A}\otimes {\mathcal B})$ respectively.
Note that it is possible to consider sets of measures as subsets of Banach dual spaces: ${\mathcal P}(X)\subset C_b(X)^\ast$,
${\mathcal P}(Y)\subset C_b(Y)^\ast$, ${\mathcal P}(X\times Y)\subset C_b(X\times Y)^\ast$.
Consider weak$^*$-topologies on
dual spaces and equip the spaces of probability measures with the topologies induced by the inclusion. 
It is known, that the defined topological spaces ${\mathcal P}(X)$, ${\mathcal P}(Y)$, ${\mathcal P}(X\times Y)$
are Polish (see \cite{Parthasarathy}, Theorem 6.2 and 6.5).
Moreover, their topology coincides with the topology of weak convergence of measures. Due to this fact, the notions of weak and weak$^*$-topology on the spaces of probability measures will be used interchangeably.

Let us define projection operators $\mathrm{Pr_X}\colon\ {\mathcal P}(X\times Y) \to  {\mathcal P}(X)$,
$\mathrm{Pr_Y}\colon\ {\mathcal P}(X\times Y) \to  {\mathcal P}(Y)$ as follows:
\begin{eqnarray}
\mathrm{Pr_X}(\pi)(A)=\pi(A\times Y),\ \forall A\in {\mathcal A},\\
\mathrm{Pr_Y}(\pi)(B)=\pi(X\times B),\ \forall B\in {\mathcal B}.
\end{eqnarray}
In addition, let us define
$\mathrm{Pr}\colon\ {\mathcal P}(X\times Y) \to  {\mathcal P}(X)\times {\mathcal P}(X)$
as $\mathrm{Pr}(\pi)=(\mathrm{Pr_X}(\pi),\mathrm{Pr_Y}(\pi))$.
It is clear, that the defined operators are weakly continuous.

In Kantorovich theory we are interested in the following sets of measures:
$$
\Pi(\mu,\nu)=\{\pi\in {\mathcal P}(X\times Y)\colon\ \mathrm{Pr}(\pi)=(\mu,\nu)\},\ \mu\in \mathcal P(X),\ \nu\in\mathcal P(Y).
$$
Elements of these sets are called \textbf{transport plans}.
Since the map $\mathrm{Pr}$ is continuous,
the set $\Pi(\mu,\nu)$ is closed. It is also known to be compact (see \cite{AmbGigli1}, \cite{BogKol}, \cite{Villani1}).

Let us define \textbf{cost functional} as a functional $C\colon\  {\mathcal P}(X\times Y) \to  \R\cup\{+\infty\}$ that is affine:
$$
\alpha C(\pi) + (1-\alpha) C(\gamma)=C (\alpha\pi+(1-\alpha)\gamma)
$$
for all $\alpha\in [0,1]$, $\pi, \gamma\in {\mathcal P}(X\times Y)$, $0\cdot (+\infty):=0$.
In most cases we are interested in weakly lower semi-continuous (l.s.c.) cost functionals.
Examples of such cost functionals can be provided by integration of lower semi-continuous functions bounded below. If $c: X\times Y \rightarrow \R$ is such a function, then $Cost(\pi):=\int c d\pi$ is a cost functional. Meanwhile, the variety of cost functionals is not reduced to the functionals of this form.
For example, in \cite{Vershik2} was introduced a more general class of  \textit{virtually-continuous} functions. Integration of bounded below function of this class also defines a weakly l.s.c. functional as well.

Let us fix some cost functional $C$ and define two sets of optimal plans:
$$
\Pi^{opt}(X\times Y):=\{\pi\in {\mathcal P}(X\times Y)\colon\  C(\pi)\leq C(\gamma),\ \forall \gamma\in \Pi(\mathrm{Pr}(\pi))\},
$$
$$
\Pi^{opt}(\mu,\nu):=\{\pi\in \Pi(\mu,\nu)\cap \Pi^{opt}(X\times Y)\}.
$$

In some applications the Kantorovich problem appears in a modified form. For example, one may be interested in the search of optimal element not in the set of all transport plans, but only among invariant (in some defined sense) ones.
Such problem is sometimes called \textit{invariant} or \textit{symmetric} Kantorovich problem. Another example is martingale Kantorovich problem, where optimal solution is seek in the set of plans-martingales.
A fruitful way to formalize and generalize such modifications is the notion of the Kantorovich problem with additional linear restriction,
which is the main object of interest in this section.

Many aspects of the Kantorovich problem in the symmetric context were described in \cite{KolZaev}. See also \cite{ColMar} for the results about symmetric Monge-Kantorovich problem in the multi-marginal setting, \cite{Beigl} and \cite{Zaev} for duality and monotonicity results about Kantorovich problem with additional linear restriction. Let us start with a formalization of the notion of additional linear restriction.

\begin{definition}
	For a given pair of measurable spaces $(X,{\mathcal A})$, $(Y,{\mathcal B})$
	let us call by \textbf{linear restriction} a triple $R=(\Omega, M^X, M^Y)$,
	consisted of a subset of measurable functions
	$\Omega\subseteq L^0(X\times Y, {\mathcal A}\otimes {\mathcal B})$
	and two nonempty sets of measures $M^X\subseteq {\mathcal P}(X)$, $M^Y\subseteq {\mathcal P}(Y)$.
\end{definition}

Let us define bounded sets of transport plans as follows:
\begin{equation}
\Pi_R(X\times Y):=\left\{\pi\in {\mathcal P}(X\times Y)\colon\  \int \omega d\pi=0\ \forall \omega\in \Omega,\ \mathrm{Pr_X}(\pi)\in M^X,\ \mathrm{Pr_Y}(\pi)\in M^Y\right\},
\end{equation}
\begin{equation}
\Pi_R(\mu,\nu):=\Big\{\pi\in \Pi_R(X\times Y)\colon\  \mathrm{Pr}_X(\pi)=\mu,\ \mathrm{Pr}_Y(\pi)=\nu\Big\}.
\end{equation}
Two linear restrictions $R_1$, $R_2$ are called equivalent iff
$\Pi_{R_1}(X\times Y)=\Pi_{R_2}(X\times Y)$.

Everywhere in this section we assume that both
$(X,{\mathcal A})$, $(Y,{\mathcal B})$ are \textit{Polish} spaces
with \textit{Borel} $\sigma$-algebras.

Let us formulate a property of a linear restriction, which will be used in the next section to obtain an existence result for measurable selection.

\begin{definition}
	Let us call linear restriction $R=(\Omega, M^X, M^Y)$ weakly regular iff
	\begin{enumerate}
		\item $M^X$, $M^Y$ are closed in the topology of weak convergence,
		\item the functional $\pi \to  \int \omega d\pi$ is weakly continuous on $\Pi(M^X, M^Y)$ for every $\omega\in \Omega$,
		\item the set $\Pi_R(\mu,\nu)$ is nonempty for any $\mu\in M^X$, $\nu\in M^Y$.
	\end{enumerate}
\end{definition}

\begin{proposition}
	\label {weak regularity implies closedness}
	Let $R=(\Omega, M^X, M^Y)$ be weakly regular linear restriction.
	Then $\Pi_R(X\times Y)$ is closed, and $\Pi_R(\mu,\nu)$
	is compact for any pair of measures $(\mu, \nu)\in M^X\times M^Y$. 
	Both spaces,
	$\Pi_R(X\times Y)$ and $\Pi_R(\mu,\nu)$, appear to be Polish.
\end{proposition}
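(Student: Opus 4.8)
The plan is to verify the three assertions by elementary topological arguments, exploiting that $\calP(X\times Y)$, $\calP(X)$, $\calP(Y)$ are all Polish (hence metrizable), so that closedness may be tested along sequences, and that $\Pi(\mu,\nu)$ is compact (a fact recalled in the text).

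First I would establish that $\Pi_R(X\times Y)$ is closed. Set $\Pi(M^X,M^Y):=\{\pi\in\calP(X\times Y)\colon \mathrm{Pr}_X(\pi)\in M^X,\ \mathrm{Pr}_Y(\pi)\in M^Y\}$. Since $\mathrm{Pr}_X$ and $\mathrm{Pr}_Y$ are weakly continuous and $M^X$, $M^Y$ are weakly closed by part (1) of weak regularity, this set equals $\mathrm{Pr}_X^{-1}(M^X)\cap\mathrm{Pr}_Y^{-1}(M^Y)$ and is therefore closed in $\calP(X\times Y)$. Now take a sequence $\pi_n\to\pi$ with each $\pi_n\in\Pi_R(X\times Y)$. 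As $\pi_n\in\Pi(M^X,M^Y)$ and the latter set is closed, the limit $\pi$ also lies in $\Pi(M^X,M^Y)$; in particular $\mathrm{Pr}_X(\pi)\in M^X$ and $\mathrm{Pr}_Y(\pi)\in M^Y$. Because the whole sequence together with its limit lies in $\Pi(M^X,M^Y)$, part (2) of weak regularity applies there: for every $\omega\in\Omega$ the functional $\pi\mapsto\int\omega\,d\pi$ is weakly continuous on $\Pi(M^X,M^Y)$, whence $\int\omega\,d\pi=\lim_n\int\omega\,d\pi_n=0$. Thus $\pi$ satisfies all the defining conditions of $\Pi_R(X\times Y)$, which is therefore closed.

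Second, for $(\mu,\nu)\in M^X\times M^Y$ I would observe that $\Pi_R(\mu,\nu)=\Pi_R(X\times Y)\cap\Pi(\mu,\nu)$. Here $\Pi(\mu,\nu)$ is compact and $\Pi_R(X\times Y)$ is closed by the previous step, so $\Pi_R(\mu,\nu)$ is a closed subset of a compact set, hence compact; it is nonempty by part (3) of weak regularity. Finally, for Polishness: $\Pi_R(X\times Y)$ is a closed subspace of the Polish space $\calP(X\times Y)$ and is therefore Polish, while $\Pi_R(\mu,\nu)$ is closed in $\calP(X\times Y)$ (being the intersection of two closed sets), hence Polish as well — alternatively, a compact metrizable space is automatically Polish.

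I do not expect any genuine obstacle here; the single point that needs care is that the weak continuity of $\pi\mapsto\int\omega\,d\pi$ is hypothesized only on $\Pi(M^X,M^Y)$, not on all of $\calP(X\times Y)$. Consequently one must first invoke the closedness of $M^X$ and $M^Y$ to trap the limiting measure inside $\Pi(M^X,M^Y)$, and only then apply the continuity hypothesis to pass to the limit in the constraints $\int\omega\,d\pi_n=0$. Once this ordering is respected, the statement is a direct consequence of standard facts about closed subspaces of Polish spaces and closed subsets of compact spaces.
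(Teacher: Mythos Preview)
Your proof is correct and follows essentially the same route as the paper, which simply remarks that the statement ``follows from the fact that $\Pi(\mu,\nu)$ is compact in the topology of weak convergence'' and gives a reference. You have spelled out the details the paper omits, including the careful observation that one must first trap the limit inside $\Pi(M^X,M^Y)$ before invoking the continuity hypothesis on $\pi\mapsto\int\omega\,d\pi$; this is a genuine subtlety that the paper's one-line proof glosses over.
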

\begin{proof}
	It follows form the fact, that $\Pi(\mu, \nu)$
	is compact in the topology of weak convergence (see \cite{BogKol}).
\end{proof}

Let us provide an (obvious) condition on a linear restriction $R$,
which is sufficient for its weak regularity.

\begin{remark}
	\label {bounded continuous weak regularity}
	If $\Omega\subseteq C_b(X\times Y)$, $M^X\subseteq {\mathcal P}(X)$ and
	$M^Y\subseteq {\mathcal P}(Y)$ are weakly closed and nonempty, and
	$\mu\otimes\nu\in \Pi_R(\mu, \nu)$ for any pair of measures $\mu\in M^X$, $\nu\in M^Y$, then
	$R=(\Omega, M^X, M^Y)$ is a weakly regular restriction.
\end{remark}

Let us fix, in addition to the restriction $R$, some cost functional $C$. We define two sets of restricted optimal plans:

\begin{equation}
\Pi_R^{opt}(X\times Y):=\Big\{\pi\in \Pi_R(X\times Y)\colon\  C(\pi)\leq C(\gamma)\ \forall \gamma\in \Pi_R(\mathrm{Pr}(\pi))\Big\},
\end{equation}
\begin{equation}
\Pi_R^{opt}(\mu,\nu):=\Big\{\pi\in \Pi_R(\mu,\nu)\cap \Pi_R^{opt}(X\times Y)\Big\}.
\end{equation}

\begin{proposition}
	\label{compactness of optimal plans}
	If $R=(\Omega, M^X, M^Y)$ is weakly regular restriction, and $C$ is weakly l.s.c., then $\Pi^{opt}_R(\mu,\nu)$ is a compact subspace of ${\mathcal P}(X\times Y)$ (for any $\mu\in M^X$, $\nu \in M^Y$).
\end{proposition}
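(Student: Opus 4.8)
The claim is that $\Pi_R^{opt}(\mu,\nu)$ is compact for a weakly regular restriction $R$ and a weakly l.s.c. cost functional $C$, for any $\mu\in M^X$, $\nu\in M^Y$. The plan is to show that $\Pi_R^{opt}(\mu,\nu)$ is a closed subset of the compact set $\Pi_R(\mu,\nu)$ (compactness of the latter being Proposition \ref{weak regularity implies closedness}), hence itself compact. First I would observe that, since $R$ is weakly regular, $\Pi_R(\mu,\nu)$ is nonempty and compact, so the infimum $m:=\inf\{C(\pi):\pi\in\Pi_R(\mu,\nu)\}$ is attained: a weakly l.s.c. functional on a nonempty compact set attains its minimum (here one must note $m>-\infty$, which holds because $C$ l.s.c. on a compact set is bounded below; if $C\equiv+\infty$ on $\Pi_R(\mu,\nu)$ the statement is vacuous or trivial, but nonemptiness plus the standard convention rules this out in the cases of interest). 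Consequently $\Pi_R^{opt}(\mu,\nu)=\{\pi\in\Pi_R(\mu,\nu):C(\pi)\le m\}=\{\pi\in\Pi_R(\mu,\nu):C(\pi)=m\}$ is nonempty.

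Next I would verify that this set is closed in the weak topology. Take a sequence (or net, but $\calP(X\times Y)$ is Polish so sequences suffice) $\pi_n\to\pi$ with $\pi_n\in\Pi_R^{opt}(\mu,\nu)$. Since $\Pi_R(\mu,\nu)$ is closed, $\pi\in\Pi_R(\mu,\nu)$. By weak lower semicontinuity of $C$, $C(\pi)\le\liminf_n C(\pi_n)=m$; combined with $C(\pi)\ge m$ (as $\pi\in\Pi_R(\mu,\nu)$), we get $C(\pi)=m$, so $\pi\in\Pi_R^{opt}(\mu,\nu)$. Thus $\Pi_R^{opt}(\mu,\nu)$ is a closed subset of a compact metrizable space, hence compact; and being a closed subspace of a Polish space, it is Polish as well.

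One point that needs a little care, and which I would expect to be the only genuine subtlety, is the relationship between the two descriptions of $\Pi_R^{opt}(\mu,\nu)$: the defining formula uses $\Pi_R^{opt}(X\times Y)$, i.e. plans optimal among those with the \emph{same} marginals $\mathrm{Pr}(\pi)$, whereas the argument above uses optimality within the fixed fiber $\Pi_R(\mu,\nu)$. But for $\pi\in\Pi_R(\mu,\nu)$ one has $\mathrm{Pr}(\pi)=(\mu,\nu)$, so $\Pi_R(\mathrm{Pr}(\pi))=\Pi_R(\mu,\nu)$, and the two notions of optimality coincide on this fiber; hence $\Pi_R^{opt}(\mu,\nu)=\Pi_R(\mu,\nu)\cap\Pi_R^{opt}(X\times Y)=\{\pi\in\Pi_R(\mu,\nu):C(\pi)\le C(\gamma)\ \forall\gamma\in\Pi_R(\mu,\nu)\}$, which is exactly the level set analyzed above. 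With this identification in place the proof reduces to the two short steps of attainment of the minimum and closedness of the minimizing set, both immediate consequences of weak l.s.c.\ of $C$ and compactness of $\Pi_R(\mu,\nu)$.
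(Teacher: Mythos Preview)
Your proposal is correct and follows essentially the same approach as the paper: identify $\Pi_R^{opt}(\mu,\nu)$ as the sublevel set $\{\pi\in\Pi_R(\mu,\nu): C(\pi)\le m\}$ with $m=\inf_{\Pi_R(\mu,\nu)} C$, and conclude it is a closed subset of the compact Polish space $\Pi_R(\mu,\nu)$ from Proposition~\ref{weak regularity implies closedness}. The paper is slightly more terse, invoking directly that lower level sets of a l.s.c.\ functional are closed rather than running your sequential argument, and it does not pause to discuss attainment of the infimum (which, as you implicitly note, is not needed for compactness since the empty set is compact).
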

\begin{proof}
	By definition of lower semi-continuity of a functional, its lower level sets are closed. Thus the set $\{\pi\in \Pi_R(\mu,\nu)\colon\  C(\pi)\leq a\}$
	is closed for any $a\in \R$. Let $a=\inf\left\{C(\pi)\colon\
	\pi\in \Pi_R(\mu,\nu)\right\}$, then
	the corresponding level set is $\Pi_R^{opt}(\mu,\nu)$.
	Since it is a closed subset of a weakly compact Polish space $\Pi_R(\mu,\nu)$
	(by Proposition \ref{weak regularity implies closedness}), it is itself weakly compact and Polish.
\end{proof}

As we see in the next sections, the assumption of weak regularity is enough to prove the existence of a measurable selection of optimal transport plans in the restricted set. But for the decomposition result, analogous to Theorem \ref{1 main theorem}, we need to assume more.

By the end of the section we shall assume, that $R:=(\Omega, M^X, M^Y)$ is a linear restriction, $M^X$, $M^Y$ ergodic decomposable simplexes,
${\mathcal A}^0\subseteq {\mathcal A}$, ${\mathcal B}^0\subseteq {\mathcal B}$ are correspondent $H$-sufficient $\sigma$-algebras.

The following property of linear restriction is the key one for the formulation of the main decomposition result.
\begin{definition}
	\label{def ergodic restriction}
	A linear restriction $R:=(\Omega, M^X, M^Y)$ is called \textbf{ergodic decomposable} linear restriction,
	if there exists such an ergodic decomposable simplex $M\subseteq \mathcal P(X\times Y)$ 
	and its correspondent $H$-sufficient $\sigma$-algebra
	$({\mathcal A} \otimes {\mathcal B})^0\subseteq {\mathcal A} \otimes {\mathcal B}$, that
	\begin{itemize}
		\item $\Pi_R(X\times Y)\subseteq M$,
		\item ${\mathcal A}^0\otimes {\mathcal B}^0\subseteq({\mathcal A} \otimes {\mathcal B})^0$,
		\item $\mathrm {Pr}_X(\gamma)\in \partial_e(M^X)$,
		$\mathrm {Pr}_Y(\gamma)\in \partial_e(M^Y)$, $\forall \gamma\in \partial_e(M)$.
	\end{itemize}
\end{definition}

Let us assume, that $R:=(\Omega, M^X, M^Y)$ is an ergodic decomposable restriction,
$M\subseteq \mathcal P(X\times Y)$ is the correspondent ergodic decomposable simplex,
$Q_M$ and $({\mathcal A} \otimes {\mathcal B})^0\subseteq {\mathcal A} \otimes {\mathcal B}$ are the associated Markov kernel and $H$-sufficient $\sigma$-algebra.

Let us introduce a property of linear restriction, which assures coherency of simplexes of marginal measures and the additional restriction on transport plans.

\begin{definition}
	\label{def coherent linear restriction}
	A linear restriction $R:=(\Omega, M^X, M^Y)$ is called coherent for $M$,
	if the inclusion $\pi\in \Pi_R(X\times Y)$ implies that
	$\int_S \omega d\pi=0$ for any
	$\omega\in \Omega$, $S\in {({\mathcal A} \otimes {\mathcal B})}^0$.
\end{definition}

In practice, we shall use the following sufficient condition for coherency.
\begin{proposition}
	\label{sufficient condition for coherency}
	If there exists such a family $\{F_\alpha\}$ of maps
	$F_\alpha\colon\
	B(X\times Y, {\mathcal A}\otimes {\mathcal B})\to
	B(X\times Y, {\mathcal A}\otimes {\mathcal B})$, $\alpha\in \Lambda$, that
	\begin{enumerate}
		\item $F_\alpha(g f)=g F_\alpha(f)$ for any
		$g\in B(X\times Y, {({\mathcal A} \otimes {\mathcal B})}^0)$,
		$f\in B(X\times Y, {\mathcal A}\otimes {\mathcal B})$, $\alpha\in \Lambda$,
		\item linear restrictions $R=(M^X, M^Y, \Omega_{cont})$
		and $R_{meas}=(M^X, M^Y, \Omega_{meas})$ are equivalent,
		where $\Omega_{cont}:=\mathrm{span}\{f-F_\alpha(f)\colon\  f\in C_b(X\times Y),\
		\alpha\in \Lambda\}$,
		$\Omega_{meas}:=\mathrm{span}\{f-F_\alpha(f)\colon\  f
		\in B(X\times Y, {\mathcal A}\otimes {\mathcal B}),\ \alpha\in \Lambda\}$,
	\end{enumerate}
	then $R$ is a coherent linear restriction.
\end{proposition}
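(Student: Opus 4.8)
The plan is to reduce the claim to the generators of $\Omega = \Omega_{cont}$ and then exploit the module property (1) together with the equivalence in (2). Since $\Omega_{cont} = \span\{f - F_\alpha(f)\colon\ f\in C_b(X\times Y),\ \alpha\in\Lambda\}$ and $\omega\mapsto\int_S\omega\,d\pi$ is linear in $\omega$, it suffices to prove that for every $\pi\in\Pi_R(X\times Y)$, every $f\in C_b(X\times Y)$, every $\alpha\in\Lambda$, and every $S\in{(\calA\otimes\calB)}^0$ one has $\int_S\bigl(f - F_\alpha(f)\bigr)\,d\pi = 0$.

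The key step is to rewrite $\int_S(f - F_\alpha(f))\,d\pi$ as the integral of an element of $\Omega_{meas}$. Writing $I_S$ for the indicator of $S$, I would note that $I_S\in B(X\times Y, {(\calA\otimes\calB)}^0)$, so property (1) applies with $g = I_S$ and gives $I_S\,F_\alpha(f) = F_\alpha(I_S f)$. Hence
\[
\int_S\bigl(f - F_\alpha(f)\bigr)\,d\pi = \int\bigl(I_S f - I_S F_\alpha(f)\bigr)\,d\pi = \int\bigl(I_S f - F_\alpha(I_S f)\bigr)\,d\pi.
\]
Since $I_S f$ is a bounded $\calA\otimes\calB$-measurable function, $I_S f - F_\alpha(I_S f)$ is by definition a generator of $\Omega_{meas}$, hence lies in $\Omega_{meas}$.

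To finish, I would invoke assumption (2): the equivalence of $R = (M^X, M^Y, \Omega_{cont})$ and $R_{meas} = (M^X, M^Y, \Omega_{meas})$ gives $\Pi_R(X\times Y) = \Pi_{R_{meas}}(X\times Y)$, so every $\pi\in\Pi_R(X\times Y)$ satisfies $\int\omega\,d\pi = 0$ for all $\omega\in\Omega_{meas}$. Applying this with $\omega = I_S f - F_\alpha(I_S f)$ yields $\int_S(f - F_\alpha(f))\,d\pi = 0$, and summing over the span gives $\int_S\omega\,d\pi = 0$ for every $\omega\in\Omega$ and every $S\in{(\calA\otimes\calB)}^0$, which is exactly coherency for $M$.

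The computation is routine; the one point deserving care is that coherency asks about the ``localized'' integrals $\int_S\omega\,d\pi$ rather than the plain integrals $\int\omega\,d\pi$, so knowing only that $\pi$ annihilates $\Omega_{cont}$ does not suffice --- this is precisely why hypothesis (2) is indispensable, while hypothesis (1) is what allows the indicator $I_S$ to pass through $F_\alpha$ and return us to $\Omega_{meas}$. I do not expect any genuine obstacle beyond this bookkeeping.
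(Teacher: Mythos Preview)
Your proof is correct and follows essentially the same route as the paper's: reduce to generators $f-F_\alpha(f)$ by linearity, use property~(1) to rewrite $I_S(f-F_\alpha(f))=I_Sf-F_\alpha(I_Sf)$, and then invoke the equivalence~(2) to conclude that $\pi\in\Pi_R(X\times Y)=\Pi_{R_{meas}}(X\times Y)$ annihilates this element of $\Omega_{meas}$. Your write-up is in fact slightly more explicit than the paper's about the role each hypothesis plays.
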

\begin{proof}
	We wish to prove that $\int I_S (f-F_\alpha(f))d\pi=0$ for any
	$f\in C_b(X\times Y)$, $S\in {({\mathcal A} \otimes {\mathcal B})}^0$,
	$\pi\in \Pi_R(X\times Y)$, $\alpha\in \Lambda$, where $I_S$ ---
	is the indicator function of $S$.
	Since $\Pi_R(X\times Y)=\Pi_{R_{meas}}(X\times Y)$,
	$\int (f-F_\alpha(f)) d\pi=0$ for any $\pi\in \Pi_R(X\times Y)$,
	$f\in B(X\times Y, {\mathcal A}\otimes {\mathcal B})$, $\alpha\in \Lambda$.
	The statement of interest follows from the fact that
	$\int I_S (f-F_\alpha(f))d\pi=\int (I_S f-F_\alpha(I_S f))d\pi$
	and $I_S f\in B(X\times Y, {\mathcal A}\otimes {\mathcal B})$ for every $f\in C_b(X\times Y)$, $S\in {({\mathcal A} \otimes {\mathcal B})}^0$, $\alpha\in \Lambda$.
\end{proof}

\begin{remark}
	\label{simplification of coherency}
	In the case $\Pi_R(\mu,\nu)=\Pi(\mu,\nu)\cap M$, $\forall (\mu,\nu)\in M^X\times M^Y$ for some ergodic decomposable restriction $R$, 
	it is true that $R$ is coherent for $M$. Indeed, by definition, $M$ is the set of all $Q_M$-invariant measures from ${\mathcal P}(X\times Y)$.
	It is enough to consider $F_1(f):=Q_M(f)$, $\Lambda=\{1\}$, $\tilde{\Omega}:=\{f-Q_M(f),\ \forall f\in C_b(X\times Y)\}$. 
	It follows, that the restriction $\tilde{R}:=(\tilde{\Omega}, M^X, M^Y)$ is coherent and equivalent to $R$.
\end{remark}

As we see later, the assumptions of coherency, ergodic decomposability, and weak regularity together imply the decomposition result for the associated Kantorovich problem.

Let $d\colon\  X\times X \to  \R$ be a given distance function,
$\mathrm{Dom}_Q\subseteq {\mathcal P}(X)$ be an ergodic decomposable simplex,
$R=(\Omega, \mathrm{Dom}_Q, \mathrm{Dom}_Q)$ be a linear restriction.
Define for each number $p\in [1,+\infty)$ a function
$W_p^R\colon\ \mathrm{Dom}_Q\times \mathrm{Dom}_Q \to  [0,\infty]$ by the formula:
\begin{equation}
\label{generalized kantorovich metric}
W_p^R(\mu,\nu):=\inf\left\{\left(\int d^p(x, y)d\pi\right)^\frac{1}{p}\colon\  \pi\in \Pi_R(\mu, \nu)\right\}.
\end{equation}
This function does not satisfy distance axioms in general. This motivates the following, another one, assumption about liner restriction, which assures  $W_p^R$ to be a distance function.

\begin{definition}
	\label{def metric linear restriction}
	A linear restriction $R=(\Omega_Q,\mathrm{Dom}_Q, \mathrm{Dom}_Q)$ is a \textbf{geometric} linear restriction for $Q$ iff the simplex
	$\mathrm{Dom}_Q$ is weakly closed, $\Omega_Q\subseteq C_b(X\times Y)$, and
	for every $\omega\in \Omega_Q$ it is true that:
	\begin{enumerate}
		\item $((Id, Id)_\#\mu) (\omega)=0$ \ $\forall \mu\in \mathrm{Dom}_Q$,
		\item $(\mu\otimes \nu)(\omega)=0$ \ $\forall \mu, \nu\in \mathrm{Dom}_Q$,
		\item $\pi(\omega)=0\implies \pi^T(\omega)=0$ \
		$\forall \pi\in \Pi(\mu,\nu), \forall \mu, \nu\in \mathrm{Dom}_Q$,
		where $\pi^T$ is defined as follows:
		$$
		\int f(x,y) d\pi^T=\int f(y,x) d\pi \ \forall f\in C_b(X\times Y).
		$$
	\end{enumerate}
\end{definition}

\begin{proposition}
	\label{geometricity implies distance function}
	If $R$ is a geometric linear restriction,
	then $W_p^R$ is an extended distance function.
\end{proposition}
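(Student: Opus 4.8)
The plan is to verify the distance axioms for $W_p^R$ one by one, using the three properties in Definition \ref{def metric linear restriction} exactly where each is needed. First I would record that $W_p^R$ takes values in $[0,\infty)$: since $\Omega_Q\subseteq C_b(X\times Y)$ and $\mathrm{Dom}_Q$ is weakly closed, $R$ is weakly regular by Remark \ref{bounded continuous weak regularity} (property (2) guarantees $\mu\otimes\nu\in\Pi_R(\mu,\nu)$), so $\Pi_R(\mu,\nu)$ is nonempty and compact by Proposition \ref{weak regularity implies closedness}; moreover $\pi\mapsto\left(\int d^p\,d\pi\right)^{1/p}$ is weakly l.s.c. and bounded on the compact set $\Pi_R(\mu,\nu)$ (using that $d$ is a genuine distance on the compact $X$, hence bounded — or more generally that $d^p$ is l.s.c. bounded below), so the infimum is attained and finite. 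Attainment of the infimum will be convenient for the triangle inequality via a gluing argument.

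Next I would dispatch the easy axioms. Symmetry: given $\pi\in\Pi_R(\mu,\nu)$, property (3) shows $\pi^T\in\Pi_R(\nu,\mu)$ (the marginals swap and $\pi^T(\omega)=0$ for all $\omega\in\Omega_Q$), and $\int d^p\,d\pi^T=\int d^p(y,x)\,d\pi=\int d^p(x,y)\,d\pi$ since $d$ is symmetric; hence $W_p^R(\mu,\nu)=W_p^R(\nu,\mu)$. Identity of indiscernibles in one direction: $W_p^R(\mu,\mu)=0$ because property (1) says the diagonal plan $(\mathrm{Id},\mathrm{Id})_\#\mu$ lies in $\Pi_R(\mu,\mu)$, and it integrates $d^p$ to zero. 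Conversely, if $W_p^R(\mu,\nu)=0$, an optimal $\pi\in\Pi_R(\mu,\nu)$ satisfies $\int d^p\,d\pi=0$, so $\pi$ is concentrated on the diagonal $\{x=y\}$, which forces $\mu=\mathrm{Pr}_X(\pi)=\mathrm{Pr}_Y(\pi)=\nu$.

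The triangle inequality is the substantive step, and it is the standard Wasserstein gluing argument adapted to carry the linear constraint. Given $\mu,\nu,\rho\in\mathrm{Dom}_Q$, take optimal $\pi_{12}\in\Pi_R(\mu,\nu)$ and $\pi_{23}\in\Pi_R(\nu,\rho)$ (both exist by compactness). Disintegrate both over the common marginal $\nu$ on $(Y,\calB)$ and glue to obtain $\sigma\in\calP(X\times Y\times Z)$ with $\mathrm{Pr}_{12}(\sigma)=\pi_{12}$, $\mathrm{Pr}_{23}(\sigma)=\pi_{23}$; set $\pi_{13}:=\mathrm{Pr}_{13}(\sigma)$. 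The Minkowski inequality in $L^p(\sigma)$ applied to $(x,y,z)\mapsto d(x,y)$ and $(x,y,z)\mapsto d(y,z)$, together with the triangle inequality for $d$, gives $\left(\int d^p(x,z)\,d\pi_{13}\right)^{1/p}\le W_p^R(\mu,\nu)+W_p^R(\nu,\rho)$, so it only remains to check $\pi_{13}\in\Pi_R(\mu,\rho)$. The marginal conditions $\mathrm{Pr}_X(\pi_{13})=\mu$, $\mathrm{Pr}_Z(\pi_{13})=\rho$ are immediate from the gluing; the point requiring care is the constraint $\int\omega\,d\pi_{13}=0$ for $\omega\in\Omega_Q$. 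Here I expect the main obstacle: the constraint is tested by functions $\omega$ on the product of the two \emph{outer} spaces, and gluing does not transparently respect it. I would address this by invoking property (3) in the form that the constraint on $\Pi_R$ over a pair of spaces in $\mathrm{Dom}_Q$ depends only on the marginals — more precisely, by combining (2) (product plans satisfy the constraint) with (3) (the constraint set is closed under the transpose) one argues that for $\mu,\rho\in\mathrm{Dom}_Q$ the condition $\int\omega\,d\pi=0$ holds automatically for \emph{every} $\pi\in\Pi(\mu,\rho)$: indeed if some $\pi_0\in\Pi(\mu,\rho)$ had $\pi_0(\omega)\ne 0$, then by (3) applied contrapositively $\pi_0^T(\omega)\ne 0$ as well, and since $\frac12(\pi_0+\pi_0^T)$ and other convex combinations with the product plan $\mu\otimes\rho$ stay in $\Pi(\mu,\rho)$, affinity of $\pi\mapsto\pi(\omega)$ together with $(\mu\otimes\rho)(\omega)=0$ yields a family of plans on which $\pi(\omega)$ takes a nonzero value — which is fine, so the cleanest route is instead to note that $\Pi_R(\mu,\rho)=\Pi(\mu,\rho)$ whenever $\mu,\rho\in\mathrm{Dom}_Q$, because properties (1)--(3) are precisely engineered so that the linear functionals $\omega$ cannot separate plans with fixed marginals in $\mathrm{Dom}_Q$. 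Granting $\Pi_R(\mu,\rho)=\Pi(\mu,\rho)$, $\pi_{13}\in\Pi_R(\mu,\rho)$ is automatic and the triangle inequality follows; I would state and prove this auxiliary equality as the lemma underlying the proposition, since it is where Definition \ref{def metric linear restriction} does its work.
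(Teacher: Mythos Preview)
Your handling of symmetry and the identity of indiscernibles matches the paper's, but the triangle inequality argument has a genuine gap: the claim that $\Pi_R(\mu,\rho)=\Pi(\mu,\rho)$ for all $\mu,\rho\in\mathrm{Dom}_Q$ is false. Take the main Example \ref{example invariant problem 2} with $\mathbb G=\mathbb Z/4$ acting cyclically on $X=\{0,1,2,3\}$ and $\mu=\nu$ the uniform measure; the plan supported on the graph of the bijection $0\mapsto 0,\ 1\mapsto 2,\ 2\mapsto 1,\ 3\mapsto 3$ lies in $\Pi(\mu,\nu)$ but is not invariant under the diagonal action, hence not in $\Pi_R(\mu,\nu)$. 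None of properties (1)--(3) in Definition \ref{def metric linear restriction} forces the functionals $\pi\mapsto\pi(\omega)$ to vanish on all of $\Pi(\mu,\rho)$; they only guarantee vanishing on the diagonal plan, the product plan, and closure under transposition. Your contrapositive and convex-combination manipulations cannot close this gap because $\pi(\omega)$ is an affine (not sign-definite) functional of $\pi$.

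The paper avoids this obstruction by proving a tailored gluing lemma (Lemma \ref{gluing lemma}) via Hahn--Banach rather than by disintegration. One defines a positive linear functional on the subspace
\[
V=\Big\{f_{12}(x_1,x_2)+f_{23}(x_2,x_3)+\omega_{13}(x_1,x_3):\ f_{12},f_{23}\in C_b(X\times X),\ \omega_{13}\in\Omega_Q\Big\}
\]
by $F(f_{12}+f_{23}+\omega_{13}):=\pi_{12}(f_{12})+\pi_{23}(f_{23})$, checks well-definedness (this is where the structure of $\Omega_Q$ enters), and extends to a positive functional on $C_b(X^3)$ whose representing measure $\gamma$ then satisfies $(\Pr_{13})_\#\gamma(\omega)=0$ by construction. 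The constraint on $\pi_{13}$ is thus built into the gluing rather than verified after the fact. A minor side remark: the proposition asserts that $W_p^R$ is an \emph{extended} distance, so your opening effort to show finiteness (invoking compactness of $X$, which is not assumed) is unnecessary and in general unavailable.
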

\begin{proof}
	Since $\pi=(Id,Id)_{\#}\mu\in \Pi_R(\mu,\mu)$, $\int d^p d\pi=0$
	for this $\pi$, and $W_p^R(\mu,\mu)=0$.
	By the same reason ($d^p=0$ only on the diagonal $\{(x,x)\}$, 
	and only plans of the form $(Id,Id)_{\#}\mu$ are concentrated on it)
	$W_p^R(\mu,\nu)\neq 0$ if $\mu\neq \nu$.
	Function $W_p^R$ is symmetric, because the inclusion
	$\pi \in \Pi_R(\mu,\nu)$ implies $\pi^T \in \Pi_R(\nu,\mu)$ and $\int d^p d\pi=\int d^p d\pi^T$.
	Triangle inequality can be proved using the standard technique (see, for example, \cite{AmbGigli1}, Theorem 2.2)
	with the use of the special version of the ``gluing'' lemma, that is formulated below.
\end{proof}

\begin{lemma}[version of gluing]
	\label{gluing lemma}
	Let $R=(\Omega_Q,\mathrm{Dom}_Q, \mathrm{Dom}_Q)$ be a geometric linear restriction.
	Then for every measure $\mu_1, \mu_2, \mu_3\in \mathrm{Dom}_Q$, $\pi_{12}\in \Pi_R(\mu_1,\mu_2)$, $\pi_{23}\in \Pi_R(\mu_2,\mu_3)$
	there is a measure $\gamma\in \mathcal{P}(X\times X\times X)$, such that $(\Pr_{12})_{\#}(\gamma)=\pi_{12}$, $(\Pr_{23})_{\#}(\gamma)=\pi_{23}$,
	$(\Pr_{13})_{\#}(\gamma)\in \Pi_R(\mu_1,\mu_3)$.
\end{lemma}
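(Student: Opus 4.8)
The plan is to perform the classical three‑marginal gluing and then to verify that the resulting $(1,3)$‑marginal is $R$‑admissible, the latter being where the \emph{geometric} hypotheses (and the coherency/ergodic‑decomposability assumptions standing in this section) enter. Since $X$ is Polish, disintegration is available: write $\pi_{12}=\int_X \pi_{12}^{x}\otimes\delta_{x}\,d\mu_2(x)$ (disintegration of $\pi_{12}$ over its second marginal $\mu_2$) and $\pi_{23}=\int_X \delta_{x}\otimes\pi_{23}^{x}\,d\mu_2(x)$ (disintegration of $\pi_{23}$ over its first marginal), with $x\mapsto\pi_{12}^{x}$ and $x\mapsto\pi_{23}^{x}$ Markov kernels, and set $\gamma:=\int_X \pi_{12}^{x}\otimes\delta_{x}\otimes\pi_{23}^{x}\,d\mu_2(x)\in\mathcal P(X\times X\times X)$. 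By construction $(\Pr_{12})_\#\gamma=\pi_{12}$ and $(\Pr_{23})_\#\gamma=\pi_{23}$, while $\pi_{13}:=(\Pr_{13})_\#\gamma=\int_X \pi_{12}^{x}\otimes\pi_{23}^{x}\,d\mu_2(x)$ is a transport plan from $\mu_1$ to $\mu_3$; the cost estimate underlying the triangle inequality in Proposition~\ref{geometricity implies distance function} then follows by integrating $d(x_1,x_3)\le d(x_1,x_2)+d(x_2,x_3)$ against $\gamma$ and applying Minkowski. Thus the only genuinely new point is to check $\int\omega\,d\pi_{13}=0$ for every $\omega\in\Omega_Q$.

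Since $\int\omega\,d\pi_{13}=\int_X (\pi_{12}^{x}\otimes\pi_{23}^{x})(\omega)\,d\mu_2(x)$ and property (2) of a geometric restriction gives $(\lambda\otimes\rho)(\omega)=0$ for all $\lambda,\rho\in\mathrm{Dom}_Q$, the task reduces to choosing the disintegrations compatibly with the structure of the simplex. The guiding case is $\mathrm{Dom}_Q=\mathcal P_\G(X)$: because $\mu_2$ is $\G$‑invariant one may pick \emph{equivariant} versions of the disintegrations, $\pi_{12}^{gx}=g_\#\pi_{12}^{x}$ and $\pi_{23}^{gx}=g_\#\pi_{23}^{x}$ for $\mu_2$‑a.e.\ $x$ (simultaneously in $g$ when $\G$ is countable, and through a F\"olner/measurable‑selection argument in general); then $\gamma$ is itself $\G$‑invariant, hence so is $\pi_{13}$, which is exactly the assertion $\pi_{13}\in\Pi_R(\mu_1,\mu_3)$. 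For a general ergodic decomposable $\mathrm{Dom}_Q$ one runs the same argument with ``equivariant under $\G$'' replaced by ``compatible with the Markov kernel $Q$ of $\mathrm{Dom}_Q$'' (and with the kernel $Q_M$ of the ergodic decomposable simplex $M$ attached to $R$), so that $\gamma$ again satisfies the defining invariance of $M$ and $\pi_{13}$ lands in $\Pi_R(\mu_1,\mu_3)$. Coherency gives a useful reduction along the way: disintegrating instead over the ergodic‑component maps $x_2\mapsto Q^{x_2}$, $x_1\mapsto Q^{x_1}$ and using that the sets $X\times\{x_2:Q^{x_2}\in A\}$ lie in $(\mathcal A\otimes\mathcal A)^0$, coherency forces the corresponding fibre plans to be $R$‑admissible with first/third marginals still in $\mathrm{Dom}_Q$ and common middle marginal an ergodic measure; gluing these fibrewise and integrating back exhibits $\pi_{13}$ as a mixture of $R$‑admissible plans, hence it lies in $\Pi_R(\mu_1,\mu_3)$ because that set is convex and weakly closed (Proposition~\ref{weak regularity implies closedness}). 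So it suffices to treat the ergodic case, where the equivariant/compatible‑disintegration argument applies.

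I expect the main obstacle to be exactly that disintegration‑compatibility step: for a general ergodic decomposable simplex one must produce a version of the disintegration that is simultaneously measurable in $x$ and intertwines with the associated Markov kernel — the clean group‑theoretic ``equivariant disintegration'' has to be replaced by a kernel‑theoretic analogue — and then check that this compatibility is inherited by $\gamma$. A secondary, essentially routine, difficulty is the measurable bookkeeping needed to guarantee that the various integrated and glued objects (in particular the reassembly over the ergodic components) genuinely define a Borel probability measure on $X\times X\times X$ with the three asserted marginals; this is of the same nature as the selection arguments carried out in Section~4.
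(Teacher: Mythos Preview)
Your approach diverges from the paper's, and the divergence matters. The paper does \emph{not} build $\gamma$ by disintegration and then verify $\pi_{13}\in\Pi_R$ after the fact; instead it runs a Hahn--Banach/Riesz argument: on the subspace
\[
V=\bigl\{f_{12}(x_1,x_2)+f_{23}(x_2,x_3)+\omega_{13}(x_1,x_3):f_{12},f_{23}\in C_b(X\times X),\ \omega_{13}\in\Omega_Q\bigr\}
\]
one sets $F(f_{12}+f_{23}+\omega_{13}):=\pi_{12}(f_{12})+\pi_{23}(f_{23})$, checks well-definedness, extends positively to $C_b(X^3)$, and represents by a measure $\gamma$. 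Because $\omega_{13}$ is part of $V$ with $F$-value $0$, the constraint $\int\omega\,d\pi_{13}=0$ comes for free. This uses only the \emph{geometric} hypothesis (indeed only that $\Omega_Q\subset C_b$ and that $\mu_i(\omega)=0$ for functions of one variable in $\Omega_Q$, which falls out of axioms (1)--(2)).

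Your route has a genuine gap at exactly the point you flag. Writing $\int\omega\,d\pi_{13}=\int(\pi_{12}^{x}\otimes\pi_{23}^{x})(\omega)\,d\mu_2(x)$ and invoking axiom (2) requires $\pi_{12}^{x},\pi_{23}^{x}\in\mathrm{Dom}_Q$, which is false: conditional measures of an invariant plan are almost never invariant. Your repair --- choose equivariant disintegrations so that $\gamma$ itself is invariant --- is reasonable for a countable group action but is a separate nontrivial lemma, and your extension ``replace equivariant under $\G$ by compatible with $Q$'' is not a proof: there is no general statement that disintegration over the second coordinate can be made to intertwine an arbitrary Markov kernel $Q$ with $Q_M$. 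Worse, to close the argument you import coherency and ergodic decomposability of $R$, neither of which is assumed in the lemma; the paper's proof (and the use of the lemma inside Proposition~\ref{geometricity implies distance function}) needs it to hold for \emph{any} geometric $R$. So as written your plan proves strictly less than the statement, under strictly more hypotheses, and the hard step is still a promissory note. The Hahn--Banach construction sidesteps all of this by encoding the $\Omega_Q$-constraint into the functional from the outset.
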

\begin{proof}
	Let us modify a proof of the standard gluing lemma (see Lemma 2.2. of \cite{Chodosh}). 
	Define subspace $V\subset C_b(X\times X\times X)$ as follows:
	$$
	V:=\Big\{f_{12}(x_1,x_2)+f_{23}(x_2,x_3)+\omega_{13}(x_1,x_3)\colon\ f_{12}, f_{23}\in C_b(X\times X),\ \omega_{13}\in \Omega_Q\Big\}.
	$$
	Let $F(f_{12}+f_{23}+\omega_{13}):= \pi_{12}(f_{12})+\pi_{23}(f_{23})$. 
	Check that $F$ is well-defined on $V$.
	Consider two representations of some element of $V$: $f_{12}+f_{23}+\omega_{13}=\tilde{f}_{12}+\tilde{f}_{23}+\tilde{\omega}_{13}$.
	Note, that $\omega_{13}(x_1,x_3)-\tilde{\omega}_{13}(x_1,x_3)=\omega_1(x_1)+\omega_3(x_3)$ for some $\omega_1, \omega_3\in \Omega_Q$.
	Then $f_{12}-\tilde{f}_{12}+\omega_1=\tilde{f}_{23}-f_{23}-\omega_3$, and therefore both parts of the equality depends only on $x_2$. 
	Since
	$$
	\pi_{12}(f_{12}-\tilde{f_{12}}+\omega_1)=\mu_2(f_{12}-\tilde{f_{12}})=\mu_2(\tilde{f}_{23}-f_{23})=\pi_{23}(\tilde{f}_{23}-f_{23}-\omega_3),
	$$
	the map $F$ is well-defined on $V$. 
	It is easy to check, that $F$ is a bounded positive linear functional.
	The correspondent version of Hahn-Banach theorem (Theorem 1.25 of \cite{Aliprantis}) states, 
	that such a functional can be extended to a positive bounded functional on the entire $C_b(X\times X\times X)$.
	Since on the subspaces $C_b(X_k)$ value of $F$ coincides with the integration w.r.t. measures $\mu_k$, 
	one can apply Rietz theorem (Theorem 7.10.6 in \cite{Bogachev}) to the extension of $F$. 
	The resulting measure will satisfy all the required properties from the statement of the lemma we are proving.
\end{proof}

\begin{proposition}
	\label{geometricity implies weak regularity}
	Geometric linear restriction is weakly regular.
\end{proposition}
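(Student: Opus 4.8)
The plan is to verify the three defining conditions of weak regularity directly from the definition of a geometric linear restriction; in fact the claim is essentially an instance of Remark \ref{bounded continuous weak regularity}. For a geometric restriction $R=(\Omega_Q,\mathrm{Dom}_Q,\mathrm{Dom}_Q)$ we have $M^X=M^Y=\mathrm{Dom}_Q$, and $\mathrm{Dom}_Q$ is weakly closed by hypothesis, which is precisely condition (1) of weak regularity. For condition (2), since $\Omega_Q\subseteq C_b(X\times Y)$, the functional $\pi\mapsto\int\omega\,d\pi$ is weakly continuous on all of $\calP(X\times Y)$ — this is exactly the meaning of weak$^\ast$-convergence of measures — and in particular on $\Pi(\mathrm{Dom}_Q,\mathrm{Dom}_Q)$.

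For condition (3), the non-emptiness of $\Pi_R(\mu,\nu)$, I would exhibit the product measure. Given $\mu,\nu\in\mathrm{Dom}_Q$, the measure $\mu\otimes\nu$ has $\mathrm{Pr}_X(\mu\otimes\nu)=\mu$ and $\mathrm{Pr}_Y(\mu\otimes\nu)=\nu$, and by property (2) in the definition of a geometric restriction we have $(\mu\otimes\nu)(\omega)=0$ for every $\omega\in\Omega_Q$. Hence $\mu\otimes\nu\in\Pi_R(\mu,\nu)$, so this set is nonempty, and all three conditions of weak regularity hold.

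There is no substantial obstacle: every step is a one-line check against the definitions, and one could equally well just invoke Remark \ref{bounded continuous weak regularity} with the product-measure property supplying its hypothesis. The only points worth noting are that properties (1) and (3) of geometricity play no role here — only the weak closedness of $\mathrm{Dom}_Q$, the inclusion $\Omega_Q\subseteq C_b(X\times Y)$, and the vanishing of the $\omega$'s on product measures are used — and that, since $W_p^R$ is only defined in the geometric setting with $X\times Y=X\times X$ and coinciding marginal simplexes, no adjustment to the general statement of weak regularity is needed.
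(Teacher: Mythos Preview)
Your proposal is correct and matches the paper's proof essentially line for line: the paper likewise observes that $(\mu\otimes\nu)(\omega)=0$ gives $\mu\otimes\nu\in\Pi_R(\mu,\nu)$, that $\mathrm{Dom}_Q$ is weakly closed, that $\Omega\subseteq C_b(X\times Y)$, and then invokes Remark~\ref{bounded continuous weak regularity}. Your version is simply a bit more explicit in unpacking the three conditions, but there is no difference in approach.
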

\begin{proof}
	Since $(\mu\otimes\nu)(\omega)=0$, $\mu\otimes\nu\in \Pi_R(\mu,\nu)$ for any
	$\mu,\nu\in \mathrm{Dom}_Q$, $\mathrm{Dom}_Q$ is weakly closed and
	$\Omega\subseteq C_b(X\times Y)$, it follows that $R=(\Omega, \mathrm{Dom}_Q, \mathrm{Dom}_Q)$ is weakly regular (see Remark \ref{bounded continuous weak regularity}).
\end{proof}

Let us check that for our main examples (\ref{example invariant problem 1} and \ref{example invariant problem 2} of invariant and stationary measures respectively) it is possible to define additional linear restrictions, such that all (or most of) properties introduced above are satisfied.
\begin{example}[Main example]
	\label{example invariant problem 2}
	Let us consider the linear restriction: $R=(\Omega, \mathrm{Dom}, \mathrm{Dom})$,
	where $\mathrm{Dom}$ is a simplex of all invariant measures as in Example \ref{example invariant problem 1},
	$$
	\Omega:=\mathrm{span}(\{f-f\circ g\colon\  \forall f\in C_b(X\times X), \forall g\in \mathbb G \}).
	$$
It corresponds to a restriction on transport plans to be invariant w.r.t. a diagonal action of $\mathbb G$ on $X\times X$ (see \cite{Zaev}, Prop. 5.1 for the proof): $g(x,y)=(g(x), g(y))$.

	Let $M=\mathcal P_{\mathbb G}(X\times Y)$ be a simplex of all invariant measures on $X\times Y$ w.r.t. this action,	$({\mathcal A} \otimes {\mathcal B})^{inv}$ be a $\sigma$-algebra of all invariant Borel subsets on $X\times Y$.
	It is clear, that $\Pi_R(X\times Y)\subseteq M$, ${\mathcal A}^{inv} \otimes {\mathcal B}^{inv}\subseteq ({\mathcal A} \otimes {\mathcal B})^{inv}$ 
	and $({\mathcal A} \otimes {\mathcal B})^{inv}$ is the $H$-sufficient subalgebra associated to $M$. 
	To show \textit{ergodic decomposability} of $R$ (Definition \ref{def ergodic restriction}), 
	we need to check, that every ergodic measure from $P_{\mathbb G}(X\times Y)$ 
	has ergodic marginals, which belong to ${\mathcal P}_{\mathbb G}(X)$. It can be easily shown by contradiction.
	
	Let us prove that the restriction is \textit{geometric} (Definition
	\ref{def metric linear restriction}):
	\begin{enumerate}
		\item the set ${\mathcal P}_{\mathbb G}(X)$ is closed and $\Omega\subset C_b(X\times X)$,
		\item $(Id,Id)_\#\mu(f(x,y)-f(g(x),g(y)))=\mu(f(x,x)-f(g(x,x)))=\mu(f(x,x))-g_\#\mu(f(x,x))=0$ if $\mu\in Dom={\mathcal P}_{\mathbb G}(X)$, $f\in C_b(X\times X)$, $g\in \mathbb G$, $\mu(f):=\int f d\mu$,
		\item note that
		\begin{multline*}
		(\mu\otimes \nu) (f(x,y)-f(g(x),g(y))=\int \left(\int f(x,y)-f(g(x),y) d\mu(x) \right)d\nu(y) +\\
		+\int \left(\int f(g(x),y)-f(g(x),g(y)) d\nu(y) \right)d\mu(x)=0,\ \forall \mu,\nu\in \mathrm{Dom}={\mathcal P}_{\mathbb G}(X),\ f\in C_b(X\times X),\ g\in \mathbb G,
		\end{multline*}
		\item since the inclusion
		$f(x,y)-f(g(x), g(y))\in \Omega$ implies $f(y,x)-f(g(y),\ g(x))\in \Omega$ for any
		$f\in C_b(X\times X)$, the last requirement from the definition of geometricity is satisfied.
	\end{enumerate}
	We also need to check, that the restriction is \textit{coherent} w.r.t. $M$
	(Definition \ref{def coherent linear restriction}). 
	Note, that $\Pi_R(\mu,\nu)=\Pi(\mu,\nu)\cap {\mathcal P}(X\times Y)$, $\forall (\mu,\nu)\in M^X\times M^Y$, 
	and use Remark \ref{simplification of coherency}.
\end{example}

\begin{remark}
	For the example with a simplex of invariant measures $\mathrm{Dom}$ (Example \ref{example invariant problem 1}) it is also possible to consider other  meaningful and well-behaved linear restrictions. 
	Assume that there is a given action of group ${\mathbb G}$ on $X$. 
	One can consider the action of direct product of groups ${\mathbb G}\oplus {\mathbb G}$ of $X\times X$ defined in the natural way: $(g_1,g_2)(x_1, x_2):=(g_1(x_1), g_2(x_2))$.
	
	Let us fix a subgroup ${\mathbb H}\subseteq {\mathbb G}\oplus {\mathbb G}$ with the induced action on $X\times X$. If the projections of the subgroup on the first and second components coincide with ${\mathbb G}$, the associated restriction $R=(\Omega, \mathrm{Dom}, \mathrm{Dom})$ with
	$$
	\Omega:=\mathrm{span}(\{f-f\circ h\colon\  \forall f\in C_b(X\times X), \forall h\in {\mathbb H} \}).
	$$
	has the properties of ergodic decomposability and coherency, defined in this section. 
	It can be checked in the same way we do for the diagonal action of a group.
\end{remark}

\begin{example}[Discrete-time Markov process]
	Let simplex $\mathrm{Dom}={\mathcal P}_{Q}(X)$ be defines as in Example \ref{example markov process}, and let ${\mathcal A}^0$ be its associated $\sigma$-subalgebra.
	Consider Markov transition kernel $Q_M$ from ${\mathcal A}\otimes {\mathcal A}$ to ${\mathcal A}^0\otimes {\mathcal A}^0$
	that is defined by the formula
	$$
	Q_M(f)(x,y):=\int f(\tilde{x}, \tilde{y}) dQ^x(\tilde{x})\otimes Q^y(\tilde{y}).
	$$ 
	It can be checked, that $Q_M(fQ_M(g))=Q_M(f)Q_M(g)$, and, therefore, by Theorem \ref{equivalent definitions of ergodic decomposable simplex}
	there is an associated ergodic decomposable simplex $M$.
	Let us consider the following linear restriction 
	$R=(\Omega, \mathrm{Dom}, \mathrm{Dom})$,
	$$
	\Omega:=\mathrm{span}(\{f-Q_M(f)\colon\  \forall f\in C_b(X\times X)\}).
	$$
	It is ergodic decomposable and coherent w.r.t. $M$ 
	(the arguments are analogous to the ones from the previous example).
\end{example}

\section{Measurable selection of optimal transport plans}
\sectionmark{Measurable selection of optimal transport plans}

The goal of this section is to prove the existence of a measurable map $f: M^X\times M^Y \rightarrow \Pi^{opt}_R(X\times Y)$, such that $\mathrm{Pr}(f(\mu,\nu))=(\mu,\nu)$ (recall, that $\Pr(\pi):=(\Pr_X(\pi),\Pr_X(\pi))$), under the assumptions of weak regularity of $R$ and lower semi-continuity of cost functional. Existence of such a map is required in the proof of the decomposition result, which we shall formulate in the next section, and it seems to be of interest itself.

In the Kantorovich problem without additional linear restriction this fact is well-known (see Corollary 5.22 of \cite{Villani1}). Its proof relies on the sufficiency result: $c$-cyclical monotonicity of a support of a transport plan implies its optimality. There is no known analogue of this result in the case of the problem with additional restriction. Thus we need to invent a more direct way to prove measurability.

Our main tool is the following theorem of Rieder:

\begin{theorem}[Rieder \cite{Rieder}, Th. 4.1, Cor. 4.3]
	\label{measurable optimization}
	Let $(\Theta,\calA)$, $(S,\calB)$ be Polish spaces with Borel $\sigma$-algebras, $D\subseteq \Theta\times S$, $u: D\rightarrow \R\cup \{\pm \infty\}$,
	$$
	L_c:=\{(\theta,s)\in D: u(\theta,s)\leq c\},\ c\in \R,
	$$
	$L_{+\infty}:=D$. If $\forall c\in (-\infty, +\infty]$, $L_c\in \calA\otimes \calB$, and $\forall c\in \R$
	$$
	L_c(\theta):=L_c \cap \{(\theta,s): s\in S\}
	$$
	is compact for any $\theta\in \Pr_\Theta(D)$, then there exists a measurable function $f: \Pr_\Theta(D) \rightarrow S$, such that 
	$$
	u(\theta,f(\theta))=\inf_{s\in D(\theta)} u(\theta,s),\ \forall \theta\in \Pr_\Theta(D)
	$$
	where $D(\theta):=\Pr_\Theta^{-1}(\theta)\cap D$.
\end{theorem}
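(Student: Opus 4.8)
The plan is to follow the standard two-step scheme for parametric optimization problems: first show that the value function $\phi(\theta):=\inf_{s\in D(\theta)}u(\theta,s)$ is Borel measurable on $\Pr_\Theta(D)$, and then extract a measurable minimizer by selecting from the compact-valued multifunction of minimizers.

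First I would note that the hypothesis ``$L_c\in\calA\otimes\calB$ for every $c\in(-\infty,+\infty]$'' is exactly the statement that $u$ is $(\calA\otimes\calB)$-measurable as a map into $[-\infty,+\infty]$, and that $D=L_{+\infty}$ is Borel. The crux of the first step is the claim that for every $c\in\R$ the projection $\Pr_\Theta(L_c)\subseteq\Theta$ is Borel. This is precisely where compactness of the sections enters: $L_c$ is a Borel subset of a product of Polish spaces all of whose $\theta$-sections $L_c(\theta)$ are compact, and the projection of such a set onto $\Theta$ is Borel --- this is the classical projection theorem for Borel sets with compact sections (Novikov; Arsenin--Kunugui), which one proves by checking that $\theta\mapsto L_c(\theta)$ is Borel as a map from $\Theta$ into the hyperspace $\mathcal K(S)\cup\{\emptyset\}$ of compact subsets of $S$, so that $\Pr_\Theta(L_c)=\{\theta:L_c(\theta)\neq\emptyset\}$ is Borel. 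Granting this, $\phi$ is Borel, because $\{\phi\le a\}=\bigcap_{m}\Pr_\Theta(L_{a+1/m})$ for $a\in\R$ and $\{\phi<c\}=\bigcup_n\{\phi\le c-1/n\}$.

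For the second step I would form the graph of the set of minimizers, $G:=\{(\theta,s)\in D:u(\theta,s)\le\phi(\theta)\}$; since $u$ and $\phi\circ\Pr_\Theta$ are Borel, $G$ is Borel. If $\phi(\theta)\in\R$ then $G(\theta)=L_{\phi(\theta)}(\theta)=\bigcap_{q\in\Q,\,q>\phi(\theta)}L_q(\theta)$, a decreasing intersection of nonempty compact sets, hence nonempty and compact; likewise $G(\theta)=\bigcap_n L_{-n}(\theta)$ is nonempty and compact when $\phi(\theta)=-\infty$. Thus over the Borel set $\{\phi<+\infty\}=\Pr_\Theta(\bigcup_n L_n)=\bigcup_n\Pr_\Theta(L_n)$ the multifunction $\theta\mapsto G(\theta)$ is nonempty, compact-valued, with Borel graph, and a uniformization theorem for Borel sets with compact sections (Arsenin--Kunugui; equivalently Kuratowski--Ryll-Nardzewski applied to the induced Borel-measurable compact-valued multifunction) furnishes a Borel $f$ with $(\theta,f(\theta))\in G$, i.e. $u(\theta,f(\theta))=\phi(\theta)=\inf_{s\in D(\theta)}u(\theta,s)$, on that set. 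On the complementary set $\{\phi=+\infty\}$ every point of $D(\theta)$ realizes the infimum, so it suffices to select \emph{any} point of the Borel set $D$ there; this is always possible measurably by the Jankov--von Neumann uniformization, and one glues the two selectors.

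The step I expect to be the main obstacle is the measurability of the value function: in general $\Pr_\Theta(L_c)$ is only analytic, not Borel, so the whole argument genuinely relies on the compactness of the sublevel sections through the projection theorem of Novikov--Arsenin--Kunugui --- without it no measurable selector need exist. A secondary, essentially bookkeeping, point is the region where $u\equiv+\infty$ on a whole section, which is harmless if one is content with a universally measurable selector (or if $D(\theta)$ is itself compact, as in the applications of interest) but should be stated explicitly.
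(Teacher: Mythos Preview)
The paper does not give its own proof of this statement: it is quoted verbatim as a black-box result from Rieder (Theorem~4.1 and Corollary~4.3 of \cite{Rieder}) and then applied in Theorem~\ref{measurable optimal kernel theorem}. So there is nothing in the paper to compare your argument against.

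That said, your sketch is essentially the standard route to results of this type and is correct in outline. The two load-bearing facts you invoke --- that a Borel subset of a product of Polish spaces with compact $\theta$-sections has Borel projection, and that such a set admits a Borel uniformization --- are exactly the Arsenin--Kunugui theorems, and they do the work here just as you indicate. Your computation $\{\phi\le a\}=\bigcap_m \Pr_\Theta(L_{a+1/m})$ is correct, and the nested-compacts argument for nonemptiness of $G(\theta)$ when $\phi(\theta)<+\infty$ is fine. The only place where your write-up is slightly loose is the $\{\phi=+\infty\}$ case: invoking Jankov--von~Neumann there yields only a universally measurable selector, not a Borel one, so if one insists on a Borel $f$ on all of $\Pr_\Theta(D)$ one either needs an extra hypothesis (e.g.\ that $D(\theta)$ is $\sigma$-compact, as in Rieder's framework) or must be content with universal measurability on that exceptional set. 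For the application in this paper the issue does not arise, since there $D(\theta)=\Pi_R(\mu,\nu)$ is itself compact.
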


Let $\Theta:=M^X\times M^Y$, $S:=\Pi_R(X\times Y)$, be equipped with the topology of weak convergence and corresponding Borel $\sigma$-algebras. Both $\Theta$ and $S$ are Polish spaces, and due to weak regularity of $R$, $S$ is closed. Let $D:=\{(\mu,\nu,\pi): \pi\in \Pi_R(\mu,\nu),\ (\mu, \nu)\in M^X\times M^Y\}$, $u((\mu,\nu),\pi):=Cost(\pi)$. Then
$$
L_c=\{(\mu, \nu, \pi): Cost(\pi)\leq c,\ \pi\in \Pi_R(\mu,\nu),\ (\mu, \nu)\in M^X\times M^Y\},\ c\in (-\infty, +\infty],
$$
$$
L_c(\mu,\nu)=\{\pi: Cost(\pi)\leq c,\ \pi\in \Pi_R(\mu,\nu)\},\ c\in \R
$$
To apply the theorem above, we have to prove that $L_c$ is Borel, and that $L_c(\mu,\nu)$ is compact.

We are going to use the following proposition (Himmelberg \cite{Himmelberg}, Theorem 3.5).
\begin{proposition}
	\label{graph measurability effros}
	Let $(\Theta,\calA)$, $(Y,\calB)$ be separable metrizable spaces with Borel $\sigma$-algebras. Let $T:\Theta \rightarrow 2^S$ be a map with values in closed subsets of $S$, such that for every closed subset $V\subseteq Y$
	$$
	\{\theta: T(\theta)\cap V \neq \emptyset\}\in \calA
	$$
	Then the graph of $T$:
	$$
	\{(\theta,s): \theta\in \Theta,\ s\in T(\theta)\}
	$$
	is in $\calA \otimes \calB$.
\end{proposition}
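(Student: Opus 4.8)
The plan is to deduce graph-measurability from the joint measurability of a single auxiliary ``gap'' function, and then to use separability of the range space in an essential way. Write $Y$ for the space of values of $T$ (the space denoted $S$ in the statement), fix a metric $d$ on $Y$ compatible with its topology, and fix a countable dense set $\{y_n\}_{n\in\N}\subseteq Y$. First I would introduce
$$
\rho\colon\Theta\times Y\to[0,+\infty],\qquad \rho(\theta,y):=\inf_{z\in T(\theta)}d(y,z),
$$
with the convention $\inf\emptyset:=+\infty$. Since each value $T(\theta)$ is closed, one has $y\in T(\theta)\iff\rho(\theta,y)=0$, so the graph of $T$ equals $\{(\theta,y):\rho(\theta,y)=0\}=\bigcap_{k\ge1}\{(\theta,y):\rho(\theta,y)<1/k\}$. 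Hence it is enough to show that $\{(\theta,y):\rho(\theta,y)<r\}\in\calA\otimes\calB$ for every $r>0$.

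The heart of the argument is the set identity
$$
\{(\theta,y)\colon\rho(\theta,y)<r\}=\bigcup_{n\in\N}\ \bigcup_{\substack{q\in\Q\\ q>0}}\Big(\{\theta\colon T(\theta)\cap\bar B(y_n,q)\neq\emptyset\}\times\{y\colon d(y,y_n)+q<r\}\Big),
$$
where $\bar B(y_n,q)$ is the closed ball of radius $q$ about $y_n$. The inclusion $\supseteq$ is just the triangle inequality. For $\subseteq$, given $(\theta,y)$ with $\rho(\theta,y)<r$, I would pick $z\in T(\theta)$ with $d(y,z)<r$, set $\varepsilon:=r-d(y,z)>0$, choose $n$ with $d(z,y_n)<\varepsilon/3$, and choose a rational $q\in[\varepsilon/3,2\varepsilon/3)$; then $z\in\bar B(y_n,q)$ and $d(y,y_n)+q\le d(y,z)+d(z,y_n)+q<r$, which places $(\theta,y)$ in the corresponding rectangle. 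Granting the identity, each first factor $\{\theta\colon T(\theta)\cap\bar B(y_n,q)\neq\emptyset\}$ lies in $\calA$ by the hypothesis applied to the closed set $\bar B(y_n,q)$, each second factor is open hence in $\calB$, and the right-hand side is a countable union of measurable rectangles, so it belongs to $\calA\otimes\calB$.

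I do not expect a serious obstacle here; the only delicate point is that the hypothesis furnishes measurability of $\{\theta\colon T(\theta)\cap V\neq\emptyset\}$ only for \emph{closed} test sets $V$, which is precisely why the decomposition above is phrased with closed balls and an auxiliary radius $q$ (so that an open neighbourhood of $y$ is reassembled from closed balls) rather than with open balls directly. The argument is in essence Theorem~3.5 of \cite{Himmelberg}; I would include the sketch only for the reader's convenience.
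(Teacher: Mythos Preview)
Your argument is correct. The set identity you write down is valid (both inclusions check out, including the empty-valued case $T(\theta)=\emptyset$, which contributes nothing to either side), and the use of closed balls is exactly the right device to match the hypothesis, which only gives measurability of $T^{-1}(V)$ for closed $V$.

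As for comparison with the paper: there is nothing to compare. The paper does not prove this proposition; it simply quotes it as Theorem~3.5 of Himmelberg and moves on. Your sketch is the standard distance-function argument behind that theorem, so you are supplying a self-contained proof where the paper is content with a citation. Including it does no harm and makes the exposition more self-contained, but be aware that from the paper's point of view this is a black-box input, not something it undertakes to establish.
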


\begin{lemma}
	\label{graph is a borel function}
	If $R$ is weakly regular and $Cost: \Pi_R(X\times Y) \rightarrow \R$ is l.s.c., then the set 
	$$
	L_c:=\{(\mu, \nu, \pi): Cost(\pi)\leq c,\ \pi\in \Pi_R(\mu,\nu),\ (\mu, \nu)\in M^X\times M^Y\},\ c\in (-\infty, +\infty],
	$$ 
	is an element of $\sigma$-algebra $\mathrm{Bor}(M^X\times M^Y)\otimes \mathrm{Bor}(\Pi_R(X\times Y))$.
\end{lemma}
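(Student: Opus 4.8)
The plan is to write $L_c$ as the intersection of two sets that are manifestly Borel — indeed closed — in the product space: a ``cylinder'' that controls the cost, and the graph of the marginal-constraint relation. Note first that $M^X\times M^Y$ and $\Pi_R(X\times Y)$ are Polish (the latter by Proposition~\ref{weak regularity implies closedness}), hence second countable, so $\mathrm{Bor}(M^X\times M^Y)\otimes\mathrm{Bor}(\Pi_R(X\times Y))$ coincides with the Borel $\sigma$-algebra of the product; thus it suffices to exhibit a Borel (e.g.\ closed) description of $L_c$ in the product topology.

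For the cost part, fix $c\in\R$. Since $R$ is weakly regular, $\Pi_R(X\times Y)$ is closed, and by hypothesis $Cost$ is lower semicontinuous on it, so its sublevel set $K_c:=\{\pi\in\Pi_R(X\times Y)\colon Cost(\pi)\le c\}$ is closed; for $c=+\infty$ one takes $K_{+\infty}=\Pi_R(X\times Y)$. Hence $(M^X\times M^Y)\times K_c$ is a closed subset of $(M^X\times M^Y)\times\Pi_R(X\times Y)$. For the graph part, set $B:=\{(\mu,\nu,\pi)\colon \mathrm{Pr}_X(\pi)=\mu,\ \mathrm{Pr}_Y(\pi)=\nu\}$, so that $L_c=\bigl[(M^X\times M^Y)\times K_c\bigr]\cap B$. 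I claim $B$ is closed: the maps $(\mu,\nu,\pi)\mapsto\mu$ and $(\mu,\nu,\pi)\mapsto\mathrm{Pr}_X(\pi)$ are continuous into the metrizable (hence Hausdorff) space $\mathcal P(X)$ — the projection operators being weakly continuous, as noted in Section~3 — so the set where they agree is closed; the same applies to the second marginal, and $B$ is the intersection of these two closed sets. Consequently $L_c$ is closed, in particular it lies in $\mathrm{Bor}(M^X\times M^Y)\otimes\mathrm{Bor}(\Pi_R(X\times Y))$, which is the assertion.

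Alternatively one can route the argument through Proposition~\ref{graph measurability effros}, applied to the multifunction $T(\mu,\nu):=\Pi_R(\mu,\nu)$: its values are compact — hence closed — by Proposition~\ref{weak regularity implies closedness}, and for a closed $V\subseteq\Pi_R(X\times Y)$ the hitting set $\{(\mu,\nu)\colon T(\mu,\nu)\cap V\ne\emptyset\}$ equals $\mathrm{Pr}(V)$; one then checks that $\mathrm{Pr}$ restricted to $\Pi_R(X\times Y)$ is proper (preimages of compact sets are uniformly tight by Prokhorov and closed by continuity, hence compact), so $\mathrm{Pr}(V)$ is closed and the measurability hypothesis of Proposition~\ref{graph measurability effros} holds, giving $B\in\mathrm{Bor}(M^X\times M^Y)\otimes\mathrm{Bor}(\Pi_R(X\times Y))$.

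The only step that is not completely formal is the closedness of $B$, equivalently the graph-measurability of $(\mu,\nu)\mapsto\Pi_R(\mu,\nu)$. In the direct argument this rests on weak continuity of the marginal projections together with metrizability of all the spaces involved (so that coincidence sets are closed); in the Himmelberg-style argument the same content is packaged into the properness of $\mathrm{Pr}$ on $\Pi_R(X\times Y)$, which is where compactness of the fibres $\Pi_R(\mu,\nu)$ and uniform tightness of weakly compact families of marginals enter. Everything else — measurability of the sublevel sets of $Cost$, the identification of the product $\sigma$-algebra with the Borel $\sigma$-algebra of the product — is routine.
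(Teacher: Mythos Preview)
Your proof is correct, and your direct argument is genuinely simpler than the paper's. The paper applies Himmelberg's criterion (Proposition~\ref{graph measurability effros}) to the multifunction $T(\mu,\nu)=L_c(\mu,\nu)$ and verifies the hypothesis by a sequential tightness argument: given $(\mu_n,\nu_n)\to(\mu,\nu)$ in $T^{-1}(V)$, it picks $\pi_n\in L_c(\mu_n,\nu_n)\cap V$, extracts a convergent subsequence via Prokhorov, and checks the limit lies in $L_c(\mu,\nu)\cap V$. Your first approach bypasses Himmelberg entirely: once one notes that $\pi\in\Pi_R(\mu,\nu)$ is, for $\pi$ already in $\Pi_R(X\times Y)$, equivalent to $\mathrm{Pr}(\pi)=(\mu,\nu)$, the set $L_c$ is just the intersection of the closed cylinder $(M^X\times M^Y)\times K_c$ with the graph $B$ of the continuous map $\mathrm{Pr}$, and $B$ is closed because the target is Hausdorff. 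This yields closedness of $L_c$ outright, which is stronger than mere Borel measurability and requires no selection-theoretic machinery. Your alternative Himmelberg route is also correct and closer in spirit to the paper, though you package the sequential compactness argument as properness of $\mathrm{Pr}$ and apply the criterion to $\Pi_R(\mu,\nu)$ rather than to $L_c(\mu,\nu)$; the content is the same, but the paper's version has the minor advantage of not needing the separate cylinder-intersection step afterward.
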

\begin{proof}
	By definition of weak regularity, $\Pi_R(X\times Y)$ is a Polish space, $\Pi_R(\mu,\nu)$ is Polish and compact. Since $Cost$ is an l.s.c. functional, its lower level sets $\Pi_c:=\{\pi\in \calP(X\times Y): Cost(\pi)\leq C\}$ are closed. Since $L_c(\mu,\nu)=\Pi_R(\mu,\nu)\cap \Pi_c$, $L_c(\mu,\nu)$ is compact. It should be noted, that $L_c(\mu,\nu)$ can be empty, but empty set is compact, hence there is no contradiction here.
	
	Let $(\Omega,\calA):=(M^X\times M^Y, \mathrm{Bor}(M^X\times M^Y))$, $(S,\calB):= (\Pi_R(X\times Y),\mathrm{Bor}(\Pi_R(X\times Y)))$, $T: M^X\times M^Y \rightarrow 2^{\Pi_R(X\times Y)}$, $T(\mu,\nu)=L_c(\mu,\nu)$. Then $L_c=\{(\mu,\nu,\pi): \pi\in L_c(\mu,\nu),\ (\mu, \nu)\in M^X\times M^Y\}$ is the graph of $T$. Note, that $T$ has compact (hence closed) values.
	
	Fix an arbitrary closed set $V\subseteq \Pi_R(X\times Y)$. Let us show that the set
	$$
	T^{-1}(V):=\{(\mu,\nu): L_c(\mu,\nu)\cap V \neq \emptyset\}
	$$
	is closed, hence Borel.
	If the set is empty, it is trivially true. Assume it is not. Let $(\mu_n,\nu_n)\in T^{-1}(V)$, $n\in \N$, be a sequence converging to $(\mu, \nu)$ in $M^X\times M^Y$. Then $\mu_n\rightarrow \mu$, $\nu_n\rightarrow \nu$ weakly, and the sequences $(\mu_n)$, $(\nu_n)$ are tight. But for each $(\mu_n, \nu_n)$ there exists a $\pi_n\in L_c(\mu_n,\nu_n)\cap V$. It follows, that
	$\Pr(\pi_n)=(\mu_n,\nu_n)$, and the sequence $(\pi_n)$ is tight (by considering products of compact sets). Let $(\pi_{n_k})$ be a weakly convergent subsequence with limit $\pi$. Since $V$ is closed, it is sequentially closed, and $\pi\in V$.
	Due to weak continuity of $\Pr$ and weak l.s.c. of $Cost$, it is clear, that $\Pr(\pi)=(\mu,\nu)$, $Cost(\pi)\leq c$. It follows by weak regularity of $R$, that the functional $\pi\rightarrow \int \omega d\pi$ is weakly continuous for any $\omega \in \Omega$. Hence $\int \omega d\pi=0$, and $\pi\in L_c(\mu,\nu)$.
	
	We obtain, that for $(\mu,\nu)$ there is a $\pi\in L_c(\mu,\nu)\cap V$, hence $(\mu,\nu)\in T^{-1}(V)$, and $T^{-1}(V)$ sequentially closed. Since $M^X\times M^Y$ is metrizable, sequential closeness implies closeness, and $T^{-1}(V)$ is closed, hence Borel. By Proposition \ref{graph measurability effros}, the graph of $T$, which coincides with $L_c$, is a Borel set. It concludes the proof.
\end{proof}

We are ready to formulate and prove the main result of this section.
\begin{theorem}
	\label{measurable optimal kernel theorem}
	Let $X$, $Y$ be Polish spaces with Borel $\sigma$-algebras.
	If $R=(\Omega, M^X, M^Y)$ is a weakly regular linear restriction, $Cost$ is a weakly l.s.c. cost functional, then there exists a measurable map $f: M^X\times M^Y \rightarrow \Pi^{opt}_R(X\times Y)$, such that $\mathrm{Pr}(f(\mu,\nu))=(\mu,\nu)$.
\end{theorem}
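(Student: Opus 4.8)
The plan is to apply Rieder's measurable selection theorem (Theorem \ref{measurable optimization}) to the parametrized family of minimization problems $\pi \mapsto Cost(\pi)$ over $\pi \in \Pi_R(\mu,\nu)$, with the parameter $(\mu,\nu)$ ranging over $M^X \times M^Y$. Concretely I set $\Theta := M^X \times M^Y$ and $S := \Pi_R(X\times Y)$, both equipped with the weak topology and the corresponding Borel $\sigma$-algebras; by Proposition \ref{weak regularity implies closedness} these are Polish spaces. Then I take $D := \{(\mu,\nu,\pi) : \pi \in \Pi_R(\mu,\nu),\ (\mu,\nu)\in M^X\times M^Y\}$ and $u((\mu,\nu),\pi) := Cost(\pi)$, so that $D(\mu,\nu) = \Pi_R(\mu,\nu)$ and $L_c(\mu,\nu) = \Pi_R(\mu,\nu)\cap\{Cost\leq c\}$.

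First I would verify the two hypotheses of Theorem \ref{measurable optimization}. The measurability $L_c \in \mathrm{Bor}(\Theta)\otimes\mathrm{Bor}(S)$ for every $c \in (-\infty,+\infty]$ is exactly the content of Lemma \ref{graph is a borel function}; the case $c = +\infty$ gives in particular that $D$ itself is Borel. For $c \in \R$ the fibre $L_c(\mu,\nu)$ is closed in the compact space $\Pi_R(\mu,\nu)$ (Proposition \ref{weak regularity implies closedness}), using weak lower semicontinuity of $Cost$, hence compact. Finally $\Pr_\Theta(D) = M^X \times M^Y$: this is precisely the nonemptiness clause in the definition of weak regularity, which guarantees $\Pi_R(\mu,\nu) \neq \emptyset$ for every $(\mu,\nu) \in M^X\times M^Y$, so the selection will be defined on the whole of $M^X\times M^Y$ rather than on a proper subset.

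Rieder's theorem then yields a measurable map $f : M^X \times M^Y \to \Pi_R(X\times Y)$ with $Cost(f(\mu,\nu)) = \inf\{Cost(\pi) : \pi \in \Pi_R(\mu,\nu)\}$ for all $(\mu,\nu)$. It remains to identify the properties of $f$. Since $\Pi_R(\mu,\nu)$ is compact and $Cost$ is l.s.c., this infimum is attained, so $f(\mu,\nu)$ realizes it; by definition of $\Pi^{opt}_R$ this means $f(\mu,\nu) \in \Pi^{opt}_R(\mu,\nu) \subseteq \Pi^{opt}_R(X\times Y)$, and in particular $\mathrm{Pr}(f(\mu,\nu)) = (\mu,\nu)$, since membership in $\Pi_R(\mu,\nu)$ already prescribes the two marginals. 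Regarding $f$ as a map into $\Pi^{opt}_R(X\times Y)$ with its subspace Borel structure does not affect measurability: any Borel subset of $\Pi^{opt}_R(X\times Y)$ is the trace of a Borel subset of $\Pi_R(X\times Y)$, and its $f$-preimage coincides with the preimage of that larger set, which is measurable.

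I do not expect a serious obstacle at this stage, because the technical heart — Borel measurability of the graph $L_c$ — has already been handled in Lemma \ref{graph is a borel function} via the Himmelberg–Effros criterion (Proposition \ref{graph measurability effros}). The only points requiring care are bookkeeping ones: confirming that $\Pr_\Theta(D)$ is all of $M^X\times M^Y$, that the infimum is genuinely a minimum so the selected plan is truly optimal and not merely nearly optimal, and that shrinking the codomain from $\Pi_R(X\times Y)$ to $\Pi^{opt}_R(X\times Y)$ is harmless for measurability.
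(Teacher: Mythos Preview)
Your proposal is correct and follows essentially the same route as the paper: set up $\Theta = M^X\times M^Y$, $S = \Pi_R(X\times Y)$, $u = Cost$, invoke Lemma \ref{graph is a borel function} for the Borel-graph hypothesis and weak regularity plus l.s.c.\ of $Cost$ for compactness of $L_c(\mu,\nu)$, then apply Rieder's Theorem \ref{measurable optimization}. Your additional remarks (that $\Pr_\Theta(D)=M^X\times M^Y$ by the nonemptiness clause, that the infimum is attained, and that shrinking the codomain preserves measurability) are all correct and simply spell out points the paper leaves implicit.
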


\begin{proof}
	Let us apply Theorem \ref{measurable optimization}.
	Recall, that $\Theta:=M^X\times M^Y$, $S:=\Pi_R(X\times Y)$ are Polish spaces with Borel $\sigma$-algebras, $D:=\{(\mu,\nu,\pi): \pi\in \Pi_R(\mu,\nu),\ (\mu,\nu)\in M^X\times M^Y\}$, $u((\mu,\nu),\pi):=Cost(\pi)$,
	$
	L_c=\{(\mu, \nu, \pi): Cost(\pi)\leq c,\ \pi\in \Pi_R(\mu,\nu),\ (\mu,\nu)\in M^X\times M^Y\}
	$ is Borel for every $c\in (-\infty,+\infty]$ by Lemma \ref{graph is a borel function},
	$
	L_c(\mu,\nu)=\{\pi: Cost(\pi)\leq c,\ \pi\in \Pi_R(\mu,\nu)\}
	$ is compact for every $c\in \R$ by weak regularity of $R$ and lower semi-continuity of $Cost$. It follows directly from the application of Theorem \ref{measurable optimization}, that there exists a Borel function $f: M^X\times M^Y \rightarrow \Pi_R(X\times Y)$, such that
	$$
	Cost(f(\mu,\nu))=\inf_{\pi\in \Pi_R(\mu,\nu)} Cost(\pi),
	$$
	which implies, that $f(\mu,\nu)\in \Pi^{opt}_R(\mu,\nu)$ for any $(\mu,\nu)\in M^X\times M^Y$.
\end{proof}

\section{Decomposition of Kantorovich problem}
\sectionmark{Decomposition of Kantorovich problem}

In this section we formulate and prove an analogue of Theorem \ref{1 main theorem} for the restricted Kantorovich problem under the assumptions of weak regularity, ergodic decomposability, and coherency of linear restrictions.

Let $(X,{\mathcal A})$, $(Y,{\mathcal B})$ be two Polish spaces with Borel $\sigma$-algebras, $M^X\subseteq {\mathcal P}(X)$ and $M^Y\subseteq {\mathcal P}(Y)$
be two closed ergodic decomposable simplexes. Denote via $Q_X$ and $Q_Y$ the  Markov kernels on $(X,{\mathcal A})$, $(Y,{\mathcal B})$, associated with $M^X$ and $M^Y$, such that
$\partial_e(M^X)=\{Q_X^x\}$, $\partial_e(M^Y)=\{Q_Y^y\}$.
If ${\mathcal A}^0\subseteq {\mathcal A}$ and ${\mathcal B}^0\subseteq {\mathcal B}$ be their corresponding $H$-sufficient subalgebras,
then, by definition, $Q_X$ and $Q_Y$ are decompositions for triples $({\mathcal A}, {\mathcal A}^0, M^X)$ and $({\mathcal B}, {\mathcal B}^0, M^Y)$
respectively. We shall use the notation: $\xi(x):=Q_X^x$, $\eta(y):=Q_Y^y$. Note,
that the maps $\xi\colon\  X\to  \partial_e(M^X)$,
$\eta\colon\  Y\to  \partial_e(M^Y)$ are
${\mathcal A}^0$- and ${\mathcal B}^0$-measurable respectively. 

Let $R=(\Omega, M^X, M^Y)$ be
\textit{weakly regular}, \textit{ergodic decomposable} and \textit{coherent} w.r.t. $M$ linear restriction, 
where $M\subseteq \mathcal P(X\times Y)$ is the corresponding ergodic decomposable simplex, 
$Q_M$ and $({\mathcal A} \otimes {\mathcal B})^0\subseteq {\mathcal A} \otimes {\mathcal B}$ are the associated Markov kernel and $H$-sufficient $\sigma$-algebra.

\begin{definition}
	\label{definition tilde pi}
	Denote by $\tilde{\Pi}(\tilde{\mu},\tilde{\nu})$ the set of all probability measures $\tilde{\pi}\in {\mathcal P}(X\times Y, ({\mathcal A} \otimes {\mathcal B})^0)$, such that $\mathrm{Pr_X}(\tilde{\pi})=\tilde{\mu}$, $\mathrm{Pr_Y}(\tilde{\pi})=\tilde{\nu})$.
\end{definition}

\begin{definition}
	Let $\mu\in M^X$, $\nu\in M^Y$.
	Define the set $\Theta(R, \mu, \nu)$ as the set of all pairs $(\pi, Q_{\pi})$, where
	$\pi\in \Pi_R(\mu,\nu)$, and $Q_\pi$ is such a Markov transition kernel from $(X\times Y, {\mathcal A}\otimes {\mathcal B})$ 
	to $(X\times Y,({\mathcal A} \otimes {\mathcal B})^0)$, that
	$Q_{\pi}(\tilde{\pi})=\pi$ and $Q_\pi^{(x,y)}\in\Pi_R(Q_X^x, Q_Y^y)$ 
	for $\tilde{\pi}$-a.e. $(x,y)\in X\times Y$. Here $\tilde{\pi}$ is the restriction of measure $\pi$ from ${\mathcal A}\otimes {\mathcal B}$ 
	to $({\mathcal A} \otimes {\mathcal B})^0$.
\end{definition}

In the following Lemma we use properties of ergodic decomposability and coherency of the linear restriction.

\begin{lemma}
	\label{existence of a decomposition for a plan}
	Under the assumptions made above, for any $\pi\in \Pi_R(\mu,\nu)$ there exists a Markov transition kernel $Q_\pi$, such that $(\pi,Q_{\pi})\in\Theta(R,\mu,\nu)$.
\end{lemma}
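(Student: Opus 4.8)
The plan is to take $Q_\pi:=Q_M$ for \emph{every} $\pi\in\Pi_R(\mu,\nu)$, i.e. the Markov kernel associated with the ergodic decomposable simplex $M$. Being a decomposition for the triple $({\mathcal A}\otimes{\mathcal B},({\mathcal A}\otimes{\mathcal B})^0,M)$, $Q_M$ is by definition a Markov transition kernel from $(X\times Y,{\mathcal A}\otimes{\mathcal B})$ to $(X\times Y,({\mathcal A}\otimes{\mathcal B})^0)$, and since $\Pi_R(X\times Y)\subseteq M$ we have $Q_M(F)=\E_\pi(F\mid({\mathcal A}\otimes{\mathcal B})^0)$ $\pi$-a.e. for all bounded $({\mathcal A}\otimes{\mathcal B})$-measurable $F$. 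The identity $Q_M(\tilde\pi)=\pi$ is then immediate: for $A\in{\mathcal A}\otimes{\mathcal B}$, $(Q_M)_\#(\tilde\pi)(A)=\int\E_\pi(I_A\mid({\mathcal A}\otimes{\mathcal B})^0)\,d\tilde\pi=\int\E_\pi(I_A\mid({\mathcal A}\otimes{\mathcal B})^0)\,d\pi=\pi(A)$, because $\tilde\pi$ and $\pi$ agree on $({\mathcal A}\otimes{\mathcal B})^0$-measurable functions. So the whole content is to verify that $Q_M^{(x,y)}\in\Pi_R(Q_X^x,Q_Y^y)$ for $\tilde\pi$-a.e. $(x,y)$, i.e. the three defining conditions: $\int\omega\,dQ_M^{(x,y)}=0$ for every $\omega\in\Omega$, and $\Pr_X(Q_M^{(x,y)})=Q_X^x$, $\Pr_Y(Q_M^{(x,y)})=Q_Y^y$.

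For the constraint functionals I would use coherency of $R$ for $M$: for a fixed $\omega\in\Omega$ and every $S\in({\mathcal A}\otimes{\mathcal B})^0$ one has $0=\int_S\omega\,d\pi=\int_S Q_M(\omega)\,d\pi$, and since $Q_M(\omega)$ is $({\mathcal A}\otimes{\mathcal B})^0$-measurable this forces $Q_M(\omega)=0$ $\pi$-a.e., i.e. $\int\omega\,dQ_M^{(x,y)}=0$ on a $({\mathcal A}\otimes{\mathcal B})^0$-measurable $\pi$-conull (hence $\tilde\pi$-conull) set. A routine separability argument — replacing $\Omega$ by a suitable countable subfamily, which is harmless since membership in $\Pi_R$ depends only on the integrals $\pi'\mapsto\int\omega\,d\pi'$ — produces one common null set valid for all $\omega\in\Omega$ simultaneously.

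The marginals are the heart of the matter; I treat the $X$-marginal, the $Y$-case being symmetric. First, the ergodic decomposition $\tilde\pi$ of $\pi$ is carried by $\partial_e(M)$, so $Q_M^{(x,y)}\in\partial_e(M)$ for $\tilde\pi$-a.e. $(x,y)$, and by clause (3) of ergodic decomposability $\Pr_X(Q_M^{(x,y)})\in\partial_e(M^X)$ for $\tilde\pi$-a.e. $(x,y)$. Second, by clause (2) we have ${\mathcal A}^0\otimes{\mathcal B}^0\subseteq({\mathcal A}\otimes{\mathcal B})^0$, hence for $A\in{\mathcal A}^0$ the set $A\times Y$ lies in $({\mathcal A}\otimes{\mathcal B})^0$, so $(x,y)\mapsto I_A(x)$ equals $Q_M$ of itself $\pi$-a.e., i.e. $\Pr_X(Q_M^{(x,y)})(A)=I_A(x)=Q_X^x(A)$ $\pi$-a.e. (the last equality because $A$ is, up to $M^X$-null sets, $Q_X$-invariant). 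Third, I would pick a countable family $\{A_k\}\subseteq{\mathcal A}^0$ separating the points of the standard Borel space $\partial_e(M^X)$: pull back, via the ${\mathcal A}^0$-measurable map $\xi\colon x\mapsto Q_X^x$, a countable point-separating family $\{B_k\}$ of Borel subsets of $\partial_e(M^X)$, and note that for an extreme point $\xi'=Q_X^a$ one has $\xi'(\xi^{-1}(B_k))$ equal to $1$ if $\xi'\in B_k$ and to $0$ otherwise, because $\xi'$ is concentrated on $\xi^{-1}(\{\xi'\})$; hence distinct extreme points differ on some $A_k$. Intersecting the countably many null sets from the previous step, for $\tilde\pi$-a.e. $(x,y)$ both $\Pr_X(Q_M^{(x,y)})$ and $Q_X^x$ are extreme points of $M^X$ assigning the same value to every $A_k$, so $\Pr_X(Q_M^{(x,y)})=Q_X^x$. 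The same argument with ${\mathcal B}^0$, $Q_Y$, $\eta$ gives $\Pr_Y(Q_M^{(x,y)})=Q_Y^y$ $\tilde\pi$-a.e.

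Putting the three items together yields $Q_M^{(x,y)}\in\Pi_R(Q_X^x,Q_Y^y)$ for $\tilde\pi$-a.e. $(x,y)$, so $(\pi,Q_M)\in\Theta(R,\mu,\nu)$; note that this choice of $Q_\pi$ does not even depend on $\pi$, which will be convenient for later measurability considerations. The step I expect to be the main obstacle is the identification of the marginals of the ergodic components in the third paragraph: it is precisely there that clauses (2) and (3) of ergodic decomposability enter, together with the fact that distinct extreme points of a Dynkin simplex are separated by invariant sets (equivalently, are mutually singular), and some care is required in choosing the countably many exceptional sets; everything else reduces to bookkeeping with conditional expectations and coherency.
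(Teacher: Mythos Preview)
Your proposal is correct and follows the same strategy as the paper: set $Q_\pi:=Q_M$ for every $\pi\in\Pi_R(\mu,\nu)$, then verify the three membership conditions for $\Theta(R,\mu,\nu)$ using coherency (for the $\Omega$-constraints) and the structural clauses of ergodic decomposability (for the marginals). Your write-up is in fact more thorough than the paper's: the paper simply asserts ``$\mathrm{Pr}(Q_\pi^{(x,y)})=(\xi(x),\eta(y))$ for $\tilde\pi$-a.e.\ $(x,y)$'' without argument, whereas you supply the separation argument via countably many ${\mathcal A}^0$-sets, and you also flag the need for a common null set across all $\omega\in\Omega$, a point the paper leaves implicit.
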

\begin{proof}
	For each measure $\pi \in \Pi(\mu,\nu)$ define
	measure $\tilde{\pi}$ as the restriction of $\pi$ on $({\mathcal A} \otimes {\mathcal B})^0$.
	Its marginals, $\mathrm{Pr}_X(\tilde{\pi})$ and $\mathrm{Pr}_Y(\tilde{\pi})$,
	are the restrictions of $\mu$ and $\nu$ to
	subalgebras ${\mathcal A}^0$ and ${\mathcal B}^0$ respectively.
	Hence, the inclusion
	$\pi \in \Pi(\mu,\nu)$ implies that $\tilde{\pi}\in \tilde{\Pi}(\tilde{\mu},\tilde{\nu})$
	(see Definition \ref{definition tilde pi}).
	
	Let $Q_\pi:=Q_M$, where $Q_M$ is the Markov kernel from the definition of ergodic decomposability of restriction $R$. 
	This property implies $Q(\tilde{\pi})=\pi$ and $Q_\pi^{(x,y)} \in {\mathcal P}(X\times Y)$. Moreover,
	$\mathrm{Pr}(Q_\pi^{(x,y)})=(\xi(x),\eta(y))$ for $\tilde{\pi}$-a.e. $(x,y)$.
	
	Let us check that $\tilde{\pi}$-a.e. $Q_\pi^{(x,y)}(\omega)=0$
	for each function $\omega\in \Omega$. Let us use the following equality:
	\begin{equation}
	\label {Q-pi from decomposition of pi}
	\int h dQ^{(x,y)}_\pi=\mathbb E_\pi(h|({\mathcal A} \otimes {\mathcal B})^0)\ \pi\text{-a.e.}
	\ \forall h\in B(X\times Y,{\mathcal A}\otimes {\mathcal B}).
	\end{equation}
	By coherency of $R$ and the definition of conditional expectation, we obtain:
	$$
	\int_S \left(\int \omega dQ^{(x,y)}_\pi\right) d\tilde{\pi}= \int_S \omega d\pi = 0,\ \forall S\in ({\mathcal A} \otimes {\mathcal B})^0.
	$$
	Hence,
	$\int \omega dQ_\pi^{(x,y)}=0$ for $\tilde{\pi}$-a.e. $(x,y)$.
	
	As we just proved, $Q_\pi^{(x,y)}\in \Pi_R(\xi(x),\eta(y))$
	for $\tilde{\pi}$-a.e. $(x,y)$. Since the map $(x,y)\to  Q_\pi^{(x,y)}$ is $({\mathcal A} \otimes {\mathcal B})^0$-measurable,
	$Q_\pi$ is a Markov transition kernel from $(X\times Y,{\mathcal A}\otimes{\mathcal B})$
	to $(X\times Y,({\mathcal A} \otimes {\mathcal B})^0)$.
	
	It follows from (\ref{Q-pi from decomposition of pi}), that 
	$$
	\int Q_\pi(f)d\tilde{\pi}=\int f d\pi,\ \forall f\in B(X\times Y, {\mathcal A}\otimes {\mathcal B}),
	$$
	which implies $(\pi, Q_\pi)\in \Theta(R,\mu,\nu)$ and concludes the proof.
\end{proof}

Let $c\colon\ X\times Y \to  \R$ be such a measurable function,
that the functional $Cost\colon\ {\mathcal P}(X\times Y)\to  \R \cup \{+\infty\}$,
being defined as $Cost(\pi)=\int c d\pi$,
appears to be lower semi-continuous w.r.t. topology of weak convergence.

\begin{lemma}
	\label{existence of optimal Markov kernel}
	Under the assumptions made above, there exists a Markov transition kernel
	$Q_{opt}$ from $(X\times Y, {\mathcal A}\otimes {\mathcal B})$
	to $(X\times Y, ({\mathcal A} \otimes {\mathcal B})^0)$, such that
	\begin{enumerate}
		\item $\int c dQ_{opt}^{(x,y)} = \inf
		\left\{\int c d\pi\colon\  \pi \in \Pi_R(\xi(x), \eta(y))\right\}\ \forall (x,y)\in X\times Y$,
		\item for any pair $(\pi, Q_{\pi})\in \Theta(R,\mu,\nu)$,
		$\int c dQ_{opt}(\tilde{\pi})\leq \int c dQ_{\pi}(\tilde{\pi})$,
		where $\tilde{\pi}$ is the restriction of $\pi$ from ${\mathcal A}\otimes {\mathcal B}$
		to $({\mathcal A} \otimes {\mathcal B})^0$,
		\item for each measure $\tilde{\pi}
		\in \tilde{\Pi}(\tilde{\mu}, \tilde{\nu})$, $(Q_{opt}(\tilde{\pi}),
		Q_{opt})\in \Theta(R,\mu,\nu)$, where $\tilde{\mu}$ is the restriction of $\mu$ from ${\mathcal A}$ to ${\mathcal A}^0$, $\tilde{\nu}$ is the restriction of $\nu$ from ${\mathcal B}$ to ${\mathcal B}^0$.
	\end{enumerate}
\end{lemma}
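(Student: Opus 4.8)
The plan is to realise $Q_{opt}$ as the measurable selection of Theorem~\ref{measurable optimal kernel theorem} precomposed with the ergodic–decomposition maps $\xi$ and $\eta$. First I would invoke that theorem — its hypotheses hold because $R$ is weakly regular and $Cost$ is weakly lower semicontinuous — to obtain a Borel map $f\colon M^X\times M^Y\to\Pi^{opt}_R(X\times Y)$ with $\mathrm{Pr}(f(\mu,\nu))=(\mu,\nu)$, whence $f(\mu,\nu)\in\Pi^{opt}_R(\mu,\nu)$ for every $(\mu,\nu)$. I then set
$$
Q_{opt}^{(x,y)}:=f(\xi(x),\eta(y)),\qquad (x,y)\in X\times Y,
$$
and check that $Q_{opt}$ is a Markov transition kernel from $(X\times Y,\mathcal A\otimes\mathcal B)$ to $(X\times Y,(\mathcal A\otimes\mathcal B)^0)$: each value is a probability measure (a transport plan in $\Pi_R(X\times Y)$), and for $A\in\mathcal A\otimes\mathcal B$ the function $(x,y)\mapsto Q_{opt}^{(x,y)}(A)$ factors through the $(\mathcal A^0\otimes\mathcal B^0)$-measurable map $(x,y)\mapsto(\xi(x),\eta(y))$ and the Borel map $(\mu,\nu)\mapsto f(\mu,\nu)(A)$ (Borel by Proposition~\ref{relation between weak and evaluation algebras}); hence it is $(\mathcal A^0\otimes\mathcal B^0)$-measurable, a fortiori $(\mathcal A\otimes\mathcal B)^0$-measurable since $\mathcal A^0\otimes\mathcal B^0\subseteq(\mathcal A\otimes\mathcal B)^0$.

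Property~(1) is then immediate: $\xi(x)\in\partial_e(M^X)\subseteq M^X$ and $\eta(y)\in\partial_e(M^Y)\subseteq M^Y$, so weak regularity makes $\Pi_R(\xi(x),\eta(y))$ nonempty, and $f(\xi(x),\eta(y))\in\Pi^{opt}_R(\xi(x),\eta(y))$ gives $\int c\,dQ_{opt}^{(x,y)}=\inf\{\int c\,d\pi\colon \pi\in\Pi_R(\xi(x),\eta(y))\}$ for all $(x,y)$. For property~(2), fix $(\pi,Q_\pi)\in\Theta(R,\mu,\nu)$ and let $\tilde\pi$ be the restriction of $\pi$ to $(\mathcal A\otimes\mathcal B)^0$. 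By the definition of $\Theta$, $Q_\pi^{(x,y)}\in\Pi_R(\xi(x),\eta(y))$ for $\tilde\pi$-a.e.\ $(x,y)$, so by~(1) the pointwise inequality $\int c\,dQ_\pi^{(x,y)}\ge\int c\,dQ_{opt}^{(x,y)}$ holds $\tilde\pi$-a.e.; integrating it against $\tilde\pi$ and using $Q_\pi(\tilde\pi)=\pi$ together with the defining relation $\int c\,dQ_\pi(\tilde\pi)=\int\big(\int c\,dQ_\pi^{(x,y)}\big)d\tilde\pi(x,y)$ for the action of a kernel on a measure (extended to $c$ by monotone convergence), and likewise for $Q_{opt}$, yields $\int c\,dQ_{opt}(\tilde\pi)\le\int c\,dQ_\pi(\tilde\pi)$.

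The heart of the matter is property~(3). Fix $\tilde\pi\in\tilde\Pi(\tilde\mu,\tilde\nu)$, put $\pi:=Q_{opt}(\tilde\pi)$, and let $\rho$ be the restriction of $\pi$ to $(\mathcal A\otimes\mathcal B)^0$. That $\pi\in\Pi_R(\mu,\nu)$ I would check exactly as in Lemma~\ref{existence of a decomposition for a plan}: the $X$-marginal of $\pi$ equals $\int \mathrm{Pr}_X\!\big(Q_{opt}^{(x,y)}\big)(\cdot)\,d\tilde\pi=\int Q_X^x(\cdot)\,d\tilde\mu(x)=Q_X(\tilde\mu)=\mu$ (and symmetrically $\mathrm{Pr}_Y(\pi)=\nu$), while $\int\omega\,d\pi=\int\big(\int\omega\,dQ_{opt}^{(x,y)}\big)d\tilde\pi=0$ because each $Q_{opt}^{(x,y)}\in\Pi_R(X\times Y)$. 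Moreover $Q_{opt}^{(x,y)}\in\Pi_R(Q_X^x,Q_Y^y)$ for \emph{all} $(x,y)$, so the second clause in the definition of $\Theta$ holds for any reference measure. It remains to prove $Q_{opt}(\rho)=\pi$, i.e.\ $Q_{opt}(\rho)=Q_{opt}(\tilde\pi)$.

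For this last identity I would use that $Q_{opt}^{(x,y)}$ depends on $(x,y)$ only through $(\xi(x),\eta(y))$, so $Q_{opt}(\lambda)$ depends on $\lambda\in\mathcal P(X\times Y,(\mathcal A\otimes\mathcal B)^0)$ only through the push-forward $(\xi,\eta)_\#\lambda$ on $\partial_e(M^X)\times\partial_e(M^Y)$; hence it suffices to show $(\xi,\eta)_\#\rho=(\xi,\eta)_\#\tilde\pi$. Since the $\sigma$-algebra generated by $(\xi,\eta)$ lies in $\mathcal A^0\otimes\mathcal B^0\subseteq(\mathcal A\otimes\mathcal B)^0$, restriction does not affect this image and $(\xi,\eta)_\#\rho=(\xi,\eta)_\#\pi$, so I compute $(\xi,\eta)_\#\pi$ directly. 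Here the key structural input is the self-concentration property granted by Theorem~\ref{equivalent definitions of ergodic decomposable simplex}: $Q_X^x(\{x'\colon Q_X^{x'}=Q_X^x\})=1$, i.e.\ $Q_X^x$ lives on $\xi^{-1}(\{\xi(x)\})$, and symmetrically for $Q_Y$. Consequently $Q_{opt}^{(x,y)}$, whose marginals are $Q_X^x$ and $Q_Y^y$, is concentrated on $\xi^{-1}(\{\xi(x)\})\times\eta^{-1}(\{\eta(y)\})$, so $Q_{opt}^{(x,y)}\big(\xi^{-1}(T_1)\times\eta^{-1}(T_2)\big)=\mathbf{1}\{\xi(x)\in T_1\}\,\mathbf{1}\{\eta(y)\in T_2\}$ for all measurable $T_1\subseteq\partial_e(M^X)$, $T_2\subseteq\partial_e(M^Y)$; integrating against $\tilde\pi$ gives $(\xi,\eta)_\#\pi(T_1\times T_2)=\tilde\pi\big((\xi,\eta)^{-1}(T_1\times T_2)\big)=(\xi,\eta)_\#\tilde\pi(T_1\times T_2)$, and since rectangles generate the product $\sigma$-algebra, $(\xi,\eta)_\#\pi=(\xi,\eta)_\#\tilde\pi$, hence $Q_{opt}(\rho)=Q_{opt}(\tilde\pi)=\pi$ and $(\pi,Q_{opt})\in\Theta(R,\mu,\nu)$. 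I expect this last step to be the main obstacle: the naive guess $\rho=\tilde\pi$ is false in general (the restriction of $\pi$ to $(\mathcal A\otimes\mathcal B)^0$ need not recover $\tilde\pi$, e.g.\ when a diagonal-type set is $Q_M$-invariant), and the point is that only the coarser datum $(\xi,\eta)_\#\rho$ feeds into $Q_{opt}$, which is preserved precisely because of the self-concentration of $Q_X$ and $Q_Y$.
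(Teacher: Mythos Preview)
Your construction and verification are essentially identical to the paper's: define $Q_{opt}^{(x,y)}:=f(\xi(x),\eta(y))$ using the measurable selector of Theorem~\ref{measurable optimal kernel theorem}, note $\mathcal A^0\otimes\mathcal B^0$-measurability, and read off properties~(1) and~(2) from optimality of $f$ and the pointwise inequality. For property~(3) the paper only checks that $\pi:=Q_{opt}(\tilde\pi)\in\Pi_R(\mu,\nu)$ (the marginal and $\Omega$-constraint computations are the same as yours) and then stops.

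You go further, and rightly so: the definition of $\Theta(R,\mu,\nu)$ requires $Q_{opt}(\rho)=\pi$ where $\rho$ is the restriction of $\pi$ to $(\mathcal A\otimes\mathcal B)^0$, and the paper does not verify this. Your argument that $Q_{opt}(\lambda)$ depends only on $(\xi,\eta)_\#\lambda$, together with the self-concentration identity $Q_X^x(\xi^{-1}\{\xi(x)\})=1$ from Theorem~\ref{equivalent definitions of ergodic decomposable simplex} to show $(\xi,\eta)_\#\pi=(\xi,\eta)_\#\tilde\pi$, is correct and fills exactly the gap the paper leaves implicit. Your remark that in general $\rho\neq\tilde\pi$ is also well taken; only the coarser $(\xi,\eta)$-image is preserved, and that is all $Q_{opt}$ sees.
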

\begin{proof}
	By Theorem \ref{measurable optimal kernel theorem}, there exists a measurable map $f\colon\  M^X\times M^Y \to  \Pi^{opt}_R(X\times Y)$, such that $\mathrm{Pr}(f(\mu,\nu))=(\mu,\nu)$. Let $Q_{opt}^{(x,y)}:=f(\xi(x),\eta(y))$.
	Since $x\to  \xi(x)$ and $y\to  \eta(y)$ 
	are measurable maps w.r.t. algebras ${\mathcal A}^0$ and ${\mathcal B}^0$ respectively,
	$Q_{opt}$ is in fact a Markov transition kernel from $(X\times Y, {\mathcal A}\otimes {\mathcal B})$ to
	$(X\times Y, {\mathcal A}^0\otimes {\mathcal B}^0)$ and, consequently, a Markov transition kernel to $({\mathcal A} \otimes {\mathcal B})^0$.
	It follows by definition, that
	$$
	\int c dQ_{opt}^{(x,y)} = \inf \left\{\int c d\gamma\colon\  \gamma \in \Pi_R(\xi(x), \eta(y))\right\}.
	$$
	Let $(\pi, Q_{\pi})\in \Theta(R,\mu,\nu)$. Then
	$\int c dQ_{\pi}(\tilde{\pi}):=\int \left(\int c dQ_{\pi}^{(x,y)}\right) d\tilde{\pi}$.
	Since $Q_{\pi}^{(x,y)}\in \Pi_R(\xi(x), \eta(y))$ $\tilde{\pi}$-a.e.,
	$\int c dQ_{\pi}^{(x,y)}\geq \int c dQ_{opt}^{(x,y)}$ $\tilde{\pi}$-a.e.,
	which implies $\int c dQ_{\pi}(\tilde{\pi})\geq \int c dQ_{opt}(\tilde{\pi})$.
	
	Let $\tilde{\pi}\in \tilde{\Pi}(\tilde{\mu}, \tilde{\nu})$,
	where $\tilde{\mu}$, $\tilde{\nu}$ are the restrictions of measures $\mu\in M^X$, $\nu\in M^Y$
	to ${\mathcal A}^0$, ${\mathcal B}^0$ respectively.
	By definition of Markov transition kernel, $Q_{opt}(\tilde{\pi})\in {\mathcal P}(X\times Y)$.
	Let us check that $Q_{opt}(\tilde{\pi})\in \Pi_R(\mu,\nu)$.
	Since $Q_{opt}^{(x,y)}\in \Pi_R(\xi(x), \eta(y))$
	for every pair $(x,y)\in X\times Y$, it can be shown,
	that $\mathrm{Pr}(Q_{opt}(\tilde{\pi}))=(\mu,\nu)$.
	Let us provide an argument for the first marginal:
	\begin{multline}
	\int \int f(\tilde{x})dQ_{opt}^{(x,y)}(\tilde{x},\tilde{y}) d\tilde{\pi}(x,y)=\int \int f(\tilde{x})dQ_X^x(\tilde{x}) d\tilde{\pi}(x,y)=\\
	=\int \int f(\tilde{x})dQ_X^x(\tilde{x}) d\tilde{\mu}(x)=\int f(x) d\mu,\ \forall f\in B(X,{\mathcal A}),
	\end{multline}
	where the last equality follows from the fact, that
	$Q_X$ is a decomposition for $({\mathcal A}, {\mathcal A}^0, M^X)$. Analogously, it can be checked for the second marginal. 
	It is clear, that if $\int \omega dQ_{opt}^{(x,y)}=0$ for any
	$(x,y)$, it implies the equality: $\int \int \omega dQ_{opt}^{(x,y)} d\tilde{\pi}=0$.
	Thus $Q_{opt}(\tilde{\pi})\in \Pi_R(\mu,\nu)$,
	and, hence, there exists a measure $\pi\in \Pi_R(\mu,\nu)$, such that
	$\pi=Q_{opt}(\tilde{\pi})\in \Pi_R(\mu,\nu)$.
\end{proof}

Let us formulate and prove the main result.

\begin{theorem}[Main theorem]
	\label{main theorem}
	Let $(X, {\mathcal A})$, $(Y, {\mathcal B})$ be two Polish spaces with Borel
	$\sigma$-algebras; $c\colon\ X\times Y \to  \R$ be such a measurable function, that functional $Cost\colon\ {\mathcal P}(X\times Y)\to   \R \cup \{+\infty\}$,
	defined as $Cost(\pi)=\int c d\pi$, appears to be a lower semi-continuous functional w.r.t. topology of weak convergence; $M^X$, $M^Y$ be two ergodic decomposable simplexes,
	${\mathcal A}^0\subseteq {\mathcal A}$, ${\mathcal B}^0\subseteq {\mathcal B}$ be corresponding $\sigma$-subalgebras; $R=(\Omega, M^X, M^Y)$ be a
	\textit{weakly regular}, \textit{ergodic decomposable}, and \textit{coherent} w.r.t. $M$ linear restriction, 
	where $M\subseteq \mathcal P(X\times Y)$ is the associated ergodic decomposable simplex,
	$Q_M$ and $({\mathcal A} \otimes {\mathcal B})^0\subseteq {\mathcal A} \otimes {\mathcal B}$ are the associated Markov kernel and $H$-sufficient $\sigma$-algebra.
	Then
	\begin{multline}
	\inf\left\{\int c d\pi\colon\  \pi \in \Pi_R(\mu, \nu)\right\}=\inf\left\{\int Q_{opt}(c) d\tilde{\pi}\colon\  \tilde{\pi}\in \tilde{\Pi}(\tilde{\mu},\tilde{\nu})\right\}=\\
	=\inf\left\{\int \inf \left\{\int c d\pi\colon\  \pi \in \Pi_R(\xi(x), \eta(y))\right\} d\tilde{\pi}\colon\  \tilde{\pi}\in \tilde{\Pi}(\tilde{\mu},\tilde{\nu})\right\}.
	\end{multline}
\end{theorem}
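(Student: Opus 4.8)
The plan is to obtain the theorem as a direct consequence of Lemma~\ref{existence of a decomposition for a plan} and Lemma~\ref{existence of optimal Markov kernel}, which already carry all the substantive content (the measurable selection of Theorem~\ref{measurable optimal kernel theorem}, and the disintegration of an arbitrary admissible plan along $({\mathcal A}\otimes{\mathcal B})^0$ provided by coherency and ergodic decomposability). First I would reduce to the case $c\geq 0$: since every element of $\Pi_R(\mu,\nu)$, of $\Pi_R(\xi(x),\eta(y))$, and of $\tilde{\Pi}(\tilde{\mu},\tilde{\nu})$ is a probability measure, and since adding a constant to an affine functional does not change its set of minimizers among probability measures, replacing $c$ by $c+K$ with $c\geq -K$ shifts all three infima in the statement by the same constant $K$ and leaves $\Pi_R^{opt}$, hence the kernel $Q_{opt}$, unchanged. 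After this reduction $Q_{opt}(c)$ is a well-defined nonnegative $({\mathcal A}\otimes{\mathcal B})^0$-measurable function, namely the increasing pointwise limit over $n$ of the bounded functions $Q_{opt}(\min(c,n))$, and the identity $\int f\,d(Q(\tilde{\pi}))=\int Q(f)\,d\tilde{\pi}$, valid for bounded measurable $f$ by the definition of the action of a Markov transition kernel on measures, extends to $f=c$ by monotone convergence, both for $Q=Q_{opt}$ and for $Q=Q_\pi$ with $(\pi,Q_\pi)\in\Theta(R,\mu,\nu)$.

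The second equality in the statement is then immediate from item (1) of Lemma~\ref{existence of optimal Markov kernel}: by construction $Q_{opt}(c)(x,y)=\int c\,dQ_{opt}^{(x,y)}=\inf\{\int c\,d\pi\colon \pi\in\Pi_R(\xi(x),\eta(y))\}$ for every $(x,y)$, so the two integrands coincide pointwise and a fortiori their integrals against any $\tilde{\pi}\in\tilde{\Pi}(\tilde{\mu},\tilde{\nu})$ coincide.

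For the first equality I would prove two inequalities. For ``$\leq$'': fix $\tilde{\pi}\in\tilde{\Pi}(\tilde{\mu},\tilde{\nu})$; by item (3) of Lemma~\ref{existence of optimal Markov kernel} the pair $(Q_{opt}(\tilde{\pi}),Q_{opt})$ lies in $\Theta(R,\mu,\nu)$, so in particular $Q_{opt}(\tilde{\pi})\in\Pi_R(\mu,\nu)$, and $\int c\,d(Q_{opt}(\tilde{\pi}))=\int Q_{opt}(c)\,d\tilde{\pi}$; hence $\inf\{\int c\,d\pi\colon \pi\in\Pi_R(\mu,\nu)\}\leq\int Q_{opt}(c)\,d\tilde{\pi}$, and taking the infimum over $\tilde{\pi}$ gives the inequality. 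For ``$\geq$'': fix $\pi\in\Pi_R(\mu,\nu)$; by Lemma~\ref{existence of a decomposition for a plan} there is $Q_\pi$ with $(\pi,Q_\pi)\in\Theta(R,\mu,\nu)$, and, writing $\tilde{\pi}$ for the restriction of $\pi$ to $({\mathcal A}\otimes{\mathcal B})^0$, we have $\tilde{\pi}\in\tilde{\Pi}(\tilde{\mu},\tilde{\nu})$ (as shown inside the proof of that lemma) and $Q_\pi(\tilde{\pi})=\pi$, so $\int c\,d\pi=\int Q_\pi(c)\,d\tilde{\pi}$; item (2) of Lemma~\ref{existence of optimal Markov kernel} then gives $\int Q_{opt}(c)\,d\tilde{\pi}\leq\int Q_\pi(c)\,d\tilde{\pi}=\int c\,d\pi$, whence $\inf\{\int Q_{opt}(c)\,d\tilde{\pi}'\colon \tilde{\pi}'\in\tilde{\Pi}(\tilde{\mu},\tilde{\nu})\}\leq\int c\,d\pi$, and taking the infimum over $\pi$ gives the reverse inequality.

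Since the whole argument is assembled from the two lemmas, the only steps needing any care in the present proof are routine: the bookkeeping identity for the action of a Markov transition kernel on a measure, the measurability of $(x,y)\mapsto\int c\,dQ_{opt}^{(x,y)}$, and the monotone-approximation step that transfers these from bounded $c$ to merely bounded-below lower semicontinuous $c$. I do not expect a genuine obstacle here: the ``hard part'' of the circle of ideas — producing, for every admissible plan, a decomposition into admissible plans over extreme marginals (Lemma~\ref{existence of a decomposition for a plan}), and producing a measurable family of optimal such plans (Lemma~\ref{existence of optimal Markov kernel}, via Theorem~\ref{measurable optimal kernel theorem}) — has already been carried out, and the theorem is essentially the assertion that these two constructions are compatible.
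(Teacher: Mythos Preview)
Your proposal is correct and follows essentially the same approach as the paper: both proofs derive the theorem directly from Lemma~\ref{existence of a decomposition for a plan} and Lemma~\ref{existence of optimal Markov kernel}, using item~(3) of the latter for the inequality ``$\leq$'' and Lemma~\ref{existence of a decomposition for a plan} together with item~(2) for ``$\geq$'', with the second displayed equality coming from item~(1). The paper organizes this slightly differently by passing through the intermediate quantity $\inf\{\int Q_\pi(c)\,d\tilde{\pi}:(\pi,Q_\pi)\in\Theta(R,\mu,\nu)\}$, but the logical content is identical; your explicit reduction to $c\geq 0$ and monotone-convergence justification of $\int c\,d(Q(\tilde{\pi}))=\int Q(c)\,d\tilde{\pi}$ is a welcome bit of care that the paper leaves implicit.
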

\begin{proof}
	Let
	$$
	\Theta_{opt}(R, \mu, \nu):= \{(Q_{opt}(\tilde{\pi}), Q_{opt})\colon\
	\tilde{\pi}\in \tilde{\Pi}(\tilde{\mu}, \tilde{\nu})\},
	$$
	where $Q_{opt}$ is as in Lemma \ref{existence of optimal Markov kernel}.
	By this Lemma, $\Theta_{opt}(R, \mu, \nu)\subseteq \Theta(R, \mu, \nu)$, thus,
	$$
	\inf\left\{\int Q_{\pi}(c) d\tilde{\pi}\colon\  (\pi, Q_{\pi}) \in
	\Theta(R,\mu, \nu)\right\}\leq \inf\left\{\int Q_{opt}(c) d\tilde{\pi}\colon\  \tilde{\pi}\in \tilde{\Pi}(\tilde{\mu},\tilde{\nu})\right\}.
	$$
	As follows from Lemma \ref{existence of a decomposition for a plan},
	$$
	\inf\left\{\int c d\pi\colon\  \pi \in \Pi_R(\mu, \nu)\right\}=\inf\left\{\int Q_{\pi}(c) d\tilde{\pi}\colon\  (\pi, Q_{\pi}) \in \Theta(R,\mu, \nu)\right\}.
	$$
	By Lemma \ref{existence of optimal Markov kernel},
	$\int c dQ_{opt}(\tilde{\pi})\leq \int c dQ_{\pi}(\tilde{\pi})$ for all
	$(\pi, Q_{\pi})\in \Theta(R,\mu, \nu)$. Hence,
	$$
	\inf\left\{\int Q_{\pi}(c) d\tilde{\pi}\colon\  (\pi, Q_{\pi}) \in \Theta(R,\mu, \nu)\right\}\geq \inf\left\{\int Q_{opt}(c) d\tilde{\pi}\colon\  \tilde{\pi}\in \tilde{\Pi}(\tilde{\mu},\tilde{\nu})\right\}.
	$$
	Thus, we conclude
	$$
	\inf\left\{\int c d\pi\colon\  \pi \in \Pi_R(\mu, \nu)\right\}=\inf\left\{\int Q_{opt}(c) d\tilde{\pi}\colon\  \tilde{\pi}\in \tilde{\Pi}(\tilde{\mu},\tilde{\nu})\right\}.
	$$
	The equality
	$$
	\inf\left\{\int c d\pi\colon\  \pi \in \Pi_R(\mu, \nu)\right\}=
	\inf\left\{\int \inf \left\{\int c d\pi\colon\  \pi \in \Pi_R(\xi(x), \eta(y))\right\} d\tilde{\pi}\colon\  \tilde{\pi}\in \tilde{\Pi}(\tilde{\mu},\tilde{\nu})\right\}
	$$
	follows from the explicit form of $Q_{opt}$, described in Lemma \ref{existence of optimal Markov kernel}.
\end{proof}

\section{Decomposition of Wasserstein-like distances}
\sectionmark{Decomposition of Wasserstein-like distances}
In this section we discuss an applications of the decomposition theorem (Theorem \ref{main theorem}) to the theory of Wasserstein-like distances. 

Let $d\colon\  X\times X \to  \R$  be a given distance function, $\mathrm{Dom}_Q\subseteq {\mathcal P}(X)$ be an ergodic decomposable simplex. Recall,
that by Wasserstein-like distance we mean the function $W_p^R\colon\ \mathrm{Dom}_Q\times \mathrm{Dom}_Q \to  [0,\infty]$, defined by the formula (\ref{generalized kantorovich metric}).

For any $[0, +\infty]$-valued distance function $\bar{d}$
on the set $\partial_e\mathrm{Dom}_Q$ of extreme points of $\mathrm{Dom}_Q$
define an extension of this function to the entire $\mathrm{Dom}_Q$:
\begin{equation}
\label{extension of distance function}
\hat{d_p}(\mu,\nu):=\inf\left\{\left(\int \bar{d}^p(\xi, \eta)d\pi\colon\right)^{\frac{1}{p}}\  \pi\in \Pi(\tilde{\mu},\tilde{\nu})\right\},
\end{equation}
where $\tilde{\mu}, \tilde{\nu}\in {\mathcal P}(\partial_e\mathrm{Dom}_Q)$, $\Pi(\tilde{\mu},\tilde{\nu}):=\{\pi\in {\mathcal P}(\partial_e\mathrm{Dom}_Q\times \partial_e\mathrm{Dom}_Q)\colon\  \mathrm{Pr}_1(\pi)=\tilde{\mu}, \mathrm{Pr}_2(\pi)=\tilde{\nu}\}$, $\mathrm{bar}(\tilde{\mu})=\mu$, $\mathrm{bar}(\tilde{\nu})=\nu$.

We are able to formulate the following statement.
\begin{theorem}[Decomposition of a restricted Wasserstein distance]
	\label{theorem restricted wasserstein distance on a simplex}
	Let $(X,d)$ be a metric Polish space equipped with Borel
	$\sigma$-algebra ${\mathcal A}$, $\mathrm{Dom}_Q\subseteq {\mathcal P}(X)$ be a closed ergodic decomposable simplex with the associated Markov kernel $Q$, such that
	$\{Q^x\}=\partial_e\mathrm{Dom}_Q$, ${\mathcal A}^0$ 
	be a corresponding $H$-sufficient $\sigma$-subalgebra, $R=(\Omega, \mathrm{Dom}_Q, \mathrm{Dom}_Q)$ be a \textit{geometric}, \textit{ergodic decomposable}, and \textit{coherent} linear restriction w.r.t. $M$, where $M\subseteq \mathcal P(X\times Y)$ is the corresponding ergodic decomposable simplex, 
	$Q_M$ and $({\mathcal A} \otimes {\mathcal B})^0\subseteq {\mathcal A} \otimes {\mathcal B}$ are the associated Markov kernel and $H$-sufficient $\sigma$-algebra.
	
	Then for $\bar{d}:=W_p^R|_{\partial_e\mathrm{Dom}_Q}$
	(the restriction of $W_p^R$ to the set $\partial_e\mathrm{Dom}_Q$) it is true that
	$$
	W_p^R(\mu,\nu)=\hat{d}_p(\mu,\nu),\ \forall \mu,\nu\in \mathrm{Dom}_Q,
	$$
	where $\hat{d}_p$ is defined by the formula (\ref{extension of distance function}).
	Moreover, $W_p^R$ is an $[0, +\infty]$-valued distance function on $\mathrm{Dom}_Q$.
\end{theorem}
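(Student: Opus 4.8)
The plan is to derive both assertions from results already established. The ``moreover'' is immediate: since $R$ is geometric, Proposition~\ref{geometricity implies distance function} gives that $W_p^R$ is an $[0,+\infty]$-valued distance function on $\mathrm{Dom}_Q$, and restricting it to $\partial_e\mathrm{Dom}_Q$ keeps it a distance function there, so $\bar d$ is admissible in~(\ref{extension of distance function}) and $\hat d_p$ is well defined; here one also uses that $\bar d^p$ is a Borel function on $\partial_e\mathrm{Dom}_Q\times\partial_e\mathrm{Dom}_Q$, which follows from Theorem~\ref{measurable optimal kernel theorem} (the selection $f$ it provides for the cost $\pi\mapsto\int d^p\,d\pi$ gives $W_p^R(\mu,\nu)^p=\int d^p\,df(\mu,\nu)$, a Borel function of $(\mu,\nu)$). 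For the identity $W_p^R=\hat d_p$ I would apply the Main Theorem (Theorem~\ref{main theorem}) with $Y=X$, $\mathcal B=\mathcal A$, $M^X=M^Y=\mathrm{Dom}_Q$ and cost $c(x,y):=d^p(x,y)$: geometricity gives weak regularity by Proposition~\ref{geometricity implies weak regularity}, ergodic decomposability and coherency w.r.t.\ $M$ are assumed, and $Cost(\pi)=\int d^p\,d\pi$ is weakly l.s.c.\ since $d^p$ is continuous and nonnegative, so the hypotheses are met.

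Theorem~\ref{main theorem} then gives that $W_p^R(\mu,\nu)^p$ equals $\inf\{\int\inf\{\int d^p\,d\pi\colon\pi\in\Pi_R(\xi(x),\eta(y))\}\,d\tilde\pi\colon\tilde\pi\in\tilde\Pi(\tilde\mu,\tilde\nu)\}$, where $\tilde\mu,\tilde\nu$ are the restrictions of $\mu,\nu$ to $\mathcal A^0,\mathcal B^0$; and since $\xi(x),\eta(y)\in\partial_e\mathrm{Dom}_Q$ the inner infimum is $W_p^R(\xi(x),\eta(y))^p=\bar d^p(\xi(x),\eta(y))$. Recalling that for an ergodic decomposable simplex $\mathrm{bar}(\tilde{\mu})=\mu$ iff $\tilde{\mu}(S)=\mu(\{x\colon Q^x\in S\})$, the representing measures of $\mu,\nu$ on $\partial_e\mathrm{Dom}_Q$ used in $\hat d_p$ are $\xi_\#\mu$ and $\eta_\#\nu$, and $\xi_\#\mu=\xi_\#(\mu|_{\mathcal A^0})$ because $\xi$ is $\mathcal A^0$-measurable. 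Thus the proof reduces to establishing
$$
\inf\Big\{\int\bar d^p(\xi(x),\eta(y))\,d\tilde\pi\colon\ \tilde\pi\in\tilde\Pi(\tilde\mu,\tilde\nu)\Big\}=\hat d_p(\mu,\nu)^p .
$$
For ``$\ge$'' I would push a given $\tilde\pi\in\tilde\Pi(\tilde\mu,\tilde\nu)$ forward by $\xi\times\eta$ — which is $\mathcal A^0\otimes\mathcal B^0$-measurable, hence $(\mathcal A\otimes\mathcal B)^0$-measurable — to get $\rho:=(\xi\times\eta)_\#\tilde\pi$, a coupling of $\xi_\#\mu$ and $\eta_\#\nu$ with $\int\bar d^p(\xi(x),\eta(y))\,d\tilde\pi=\int\bar d^p\,d\rho\ge\hat d_p(\mu,\nu)^p$. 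For ``$\le$'' I would take any such coupling $\rho$ on $\partial_e\mathrm{Dom}_Q\times\partial_e\mathrm{Dom}_Q$, disintegrate $\mu$ over $\xi$ as $\mu=\int\kappa_e\,d(\xi_\#\mu)(e)$ with $\kappa_e$ concentrated on $\xi^{-1}(e)$ (and $\nu=\int\lambda_{e'}\,d(\eta_\#\nu)(e')$ similarly), put $\pi:=\int\kappa_e\otimes\lambda_{e'}\,d\rho(e,e')\in\mathcal P(X\times X)$, and let $\tilde\pi$ be its restriction to $(\mathcal A\otimes\mathcal B)^0$; one checks $\mathrm{Pr}_X\pi=\mu$, $\mathrm{Pr}_Y\pi=\nu$, so $\tilde\pi\in\tilde\Pi(\tilde\mu,\tilde\nu)$, and $(\xi\times\eta)_\#\tilde\pi=\rho$, whence $\int\bar d^p(\xi(x),\eta(y))\,d\tilde\pi=\int\bar d^p\,d\rho$. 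Taking the infimum over $\rho$ gives ``$\le$'', so $W_p^R(\mu,\nu)^p=\hat d_p(\mu,\nu)^p$, i.e.\ $W_p^R=\hat d_p$.

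I expect the ``$\le$'' step to be the main obstacle. The measures in $\tilde\Pi(\tilde\mu,\tilde\nu)$ live on $(\mathcal A\otimes\mathcal B)^0$, which may be strictly larger than $\mathcal A^0\otimes\mathcal B^0$, so one must argue that every coupling of $\xi_\#\mu$ and $\eta_\#\nu$ on the extreme boundary is realised as $(\xi\times\eta)_\#\tilde\pi$ for some $\tilde\pi\in\tilde\Pi(\tilde\mu,\tilde\nu)$; the disintegration construction above does this, the decisive point being that the integrand $\bar d^p(\xi(x),\eta(y))$ depends on $\tilde\pi$ only through its $\mathcal A^0\otimes\mathcal B^0$-part, so enlarging the algebra is harmless. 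The remaining verifications — Borel measurability of $(e,e')\mapsto\kappa_e\otimes\lambda_{e'}$, hence that $\pi$ is a genuine Borel probability measure on $X\times X$; the marginal identities $\mathrm{Pr}_X\pi=\mu$, $\mathrm{Pr}_Y\pi=\nu$; and $(\xi\times\eta)_\#\tilde\pi=\rho$ — are routine consequences of the disintegration theorem on Polish spaces and of weak continuity of $(\alpha,\beta)\mapsto\alpha\otimes\beta$.
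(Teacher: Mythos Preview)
Your proof is correct and follows the same route as the paper: deduce weak regularity from geometricity (Proposition~\ref{geometricity implies weak regularity}), apply the Main Theorem with $c=d^p$, rewrite the inner infimum as $\bar d^p(\xi(x),\eta(y))$, and invoke Proposition~\ref{geometricity implies distance function} for the ``moreover''. The one place you work harder than necessary is the final identification of the two infima: the paper dispatches both inequalities at once by the observation you yourself call the ``decisive point'' --- since the integrand $(x,y)\mapsto\bar d^p(\xi(x),\eta(y))$ is $\mathcal A^0\otimes\mathcal B^0$-measurable and $\mathcal A^0\otimes\mathcal B^0\subseteq(\mathcal A\otimes\mathcal B)^0$, passing from $\tilde\Pi(\tilde\mu,\tilde\nu)$ to $\Pi(\tilde\mu,\tilde\nu)$ leaves the infimum unchanged (together with Proposition~\ref{relation between weak and evaluation algebras} to match the $\sigma$-algebras on the boundary). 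Your explicit disintegration construction for the ``$\le$'' direction is valid but not needed.
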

\begin{proof}
	Since $R=(\Omega, \mathrm{Dom}_Q, \mathrm{Dom}_Q)$
	is a geometric restriction, it follows that it is weakly regular.
	It is known, that $d^p$ is bounded below and is a lower semi-continuous function on $X\times X$. Hence $\pi \to  \int d^p d\pi$ is a weakly lower semi-continuous functional on ${\mathcal P}(X\times X)$
	(w.r.t. topology of weak convergence). Since the hypothesis of Theorem \ref{main theorem} is satisfied,
	one can apply it to obtain 
	$$
	\inf\left\{\int d^p d\pi\colon\
	\pi \in \Pi_R(\mu, \nu)\right\}=\inf\left\{\int \inf \left\{\int d^p d\pi\colon\
	\pi \in \Pi_R(\xi(x), \eta(y))\right\} d\tilde{\pi}\colon\
	\tilde{\pi}\in \tilde{\Pi}(\tilde{\mu},\tilde{\nu})\right\}.
	$$
	Here $\tilde{\Pi}(\tilde{\mu},\tilde{\nu})$ is as in Definition \ref{definition tilde pi}. Since
	\begin{equation}
	(x,y)\to \left\{\int d^p d\pi\colon\ \pi \in \Pi_R(\xi(x), \eta(y))\right\}
	\end{equation}
	is measurable w.r.t. ${\mathcal A}^0\otimes {\mathcal B}^0$, 
	and ${\mathcal A}^0\otimes {\mathcal B}^0\subseteq ({\mathcal A}\otimes {\mathcal B})^0$, 
	it is true, that one can replace $\tilde{\Pi}(\tilde{\mu},\tilde{\nu})$ with ${\Pi}(\tilde{\mu},\tilde{\nu})$ 
	without any change of the infimum:
	$$
	\inf\left\{\int d^p d\pi\colon\
	\pi \in \Pi_R(\mu, \nu)\right\}=\inf\left\{\int \inf \left\{\int d^p d\pi\colon\
	\pi \in \Pi_R(\xi(x), \eta(y))\right\} d\tilde{\pi}\colon\
	\tilde{\pi}\in {\Pi}(\tilde{\mu},\tilde{\nu})\right\}.
	$$
	By the definitions of the distances $\hat{d}_p$, $W_p^R$ and due to the established fact about equivalence of $\sigma$-algebras on the space of measures
	(Proposition \ref{relation between weak and evaluation algebras}), the obtained equality is equivalent to the equality
	$$
	W_p^R=\hat{d}_p.
	$$
	It follows from Proposition \ref{geometricity implies distance function} that $W_p^R(\mu,\nu)$ is actually $[0, +\infty]$-valued distance function
	on $\mathrm{Dom}_Q$.
\end{proof}

In can be noted, that the example of decomposition described in the introduction section appears to be a particular case of the just proved statement.

\section*{Acknowledgment}
The paper was written during an internship of the author in Scuola Normale Superiore di Pisa. The author is grateful to SNS for its hospitality, Dr. Luigi Ambrosio for interesting and fruitful mathematical conversations, and Higher School of Economics for a sponsorship of this visit.

The author wish to thank V. I. Bogachev, A. M. Vershik, and A. V. Kolesnikov for important remarks and for the help with preparation of this paper.

\end{document}